\documentclass[10pt,reqno]{amsart}

%%%%%%%%%%%%%%%%%%%%%%%%%%%%%%%%%%%%%%%%%%%%%%%%%%%%%%%%%%%%%%%%%%%%%%%%
%%%%%%%%%%%%%%%%%%%%%%%%%%%%%%%%%%%%%%%%%%%%%%%%%%%%%%%%%%%%%%%%%%%%%%%%
%%%%%%%%%%%%%%%%%%%%%%%%%%%%%%%%%%%%%%%%%%%%%%%%%%%%%%%%%%%%%%%%%%%%%%%%
\usepackage[utf8]{inputenc}
\usepackage[english]{babel}
\usepackage{amsmath,amsfonts,amssymb,amsthm,shuffle}
\usepackage[T1]{fontenc}
\usepackage{lmodern}
\usepackage{mathtools}
\usepackage[titletoc]{appendix}

% Layout.
\usepackage[top=3.5cm,bottom=3.5cm,left=3.6cm,right=3.6cm]{geometry}

% Colors of hyperlinks.
\usepackage[dvipsnames]{xcolor}
\usepackage[hyperindex=true,frenchlinks=true,colorlinks=true,
citecolor=Mahogany,linkcolor=DarkOrchid,urlcolor=Tan,linktocpage,
pagebackref=true]{hyperref}

% Tikz.
\usepackage{tikz}
\usetikzlibrary{shapes}
\usetikzlibrary{fit}
\usetikzlibrary{decorations.pathmorphing}
\usepackage[textsize=footnotesize]{todonotes}
% Misc.
\usepackage{dsfont}
\usepackage{wasysym}
\usepackage{stmaryrd}
\usepackage{cite}
\usepackage{subfigure}
\usepackage{multirow}
\usepackage{enumitem}
\usepackage{multicol}
\usepackage{bm}

% Line space.
\linespread{1.15}

%%%%%%%%%%%%%%%%%%%%%%%%%%%%%%%%%%%%%%%%%%%%%%%%%%%%%%%%%%%%%%%%%%%%%%%%
%%%%%%%%%%%%%%%%%%%%%%%%%%%%%%%%%%%%%%%%%%%%%%%%%%%%%%%%%%%%%%%%%%%%%%%%
%%%%%%%%%%%%%%%%%%%%%%%%%%%%%%%%%%%%%%%%%%%%%%%%%%%%%%%%%%%%%%%%%%%%%%%%
\title[]{An exponential inequality for orthomartingale difference random fields 
and some applications}
\keywords{orthomartingales; exponential inequality; random fields; law of large 
numbers; invariance principles in function spaces}
\date{\today}
\author{Davide Giraudo}
\thanks{ Acknowledgement: “The work is supported by Ministry of Science and 
Higher Education of the Russian Federation, agreement N 075–15–2019–1619“}
\address{Saint-Petersburg University}

%%%%%%%%%%%%%%%%%%%%%%%%%%%%%%%%%%%%%%%%%%%%%%%%%%%%%%%%%%%%%%%%%%%%%%%%
%%%%%%%%%%%%%%%%%%%%%%%%%%%%%%%%%%%%%%%%%%%%%%%%%%%%%%%%%%%%%%%%%%%%%%%%
%%%%%%%%%%%%%%%%%%%%%%%%%%%%%%%%%%%%%%%%%%%%%%%%%%%%%%%%%%%%%%%%%%%%%%%%
\numberwithin{equation}{section}
\setcounter{tocdepth}{2}
\renewcommand{\leq}{\leqslant}
\renewcommand{\geq}{\geqslant}

\newtheorem{Theorem}{Theorem}[section]
\newtheorem{Th\'eor\`eme}{Th\'eor\`eme}[section]
\newtheorem{Proposition}[Theorem]{Proposition}
\newtheorem{Lemma}[Theorem]{Lemma}

\newtheorem{Definition}[Theorem]{Definition}
\newtheorem{D\'efinition}[Th\'eor\`eme]{D\'efinition}

\theoremstyle{remark}
\newtheorem{Remark}[Theorem]{Remark}
\newtheorem{Example}[Theorem]{Example}

%%%%%%%%%%%%%%%%%%%%%%%%%%%%%%%%%%%%%%%%%%%%%%%%%%%%%%%%%%%%%%%%%%%%%%%%
%%%%%%%%%%%%%%%%%%%%%%%%%%%%%%%%%%%%%%%%%%%%%%%%%%%%%%%%%%%%%%%%%%%%%%%%
%%%%%%%%%%%%%%%%%%%%%%%%%%%%%%%%%%%%%%%%%%%%%%%%%%%%%%%%%%%%%%%%%%%%%%%%
\tikzstyle{Vertex}=[circle,draw=LimeGreen!80,fill=LimeGreen!8,
inner sep=1pt,minimum size=2mm,line width=1pt,font=\scriptsize]
\tikzstyle{Node}=[Vertex,draw=RoyalBlue!80,fill=RoyalBlue!8,inner sep=1.5pt]
\tikzstyle{Leaf}=[rectangle,draw=Black!70,fill=Black!16,
inner sep=0pt,minimum size=1mm,line width=1.25pt]
\tikzstyle{Edge}=[Maroon!80,cap=round,line width=1pt]
\tikzstyle{Mark1}=[draw=BrickRed!80,fill=BrickRed!8]
\tikzstyle{Mark2}=[draw=BurntOrange!80,fill=BurntOrange!8]
\tikzstyle{EdgeRew}=[->,RedOrange!80,cap=round,thick]

%%%%%%%%%%%%%%%%%%%%%%%%%%%%%%%%%%%%%%%%%%%%%%%%%%%%%%%%%%%%%%%%%%%%%%%%
%%%%%%%%%%%%%%%%%%%%%%%%%%%%%%%%%%%%%%%%%%%%%%%%%%%%%%%%%%%%%%%%%%%%%%%%
%%%%%%%%%%%%%%%%%%%%%%%%%%%%%%%%%%%%%%%%%%%%%%%%%%%%%%%%%%%%%%%%%%%%%%%%

\newcommand{\Fca}{\mathcal{F}}
\newcommand{\Gca}{\mathcal{G}}
\newcommand{\Hca}{\mathcal{H}}

\newcommand{\Rca}{\mathcal{R}}

\newcommand \ens[1]{\left\{ #1\right\}}
\newcommand \R{\mathbb R}

\newcommand \N{\mathbb N}
\newcommand \PP{\mathbb P}

\newcommand{\E}[1]{\mathbb E\left[#1\right]}

\newcommand \Z{\mathbb Z}

\newcommand \abs[1]{\left|#1\right|}
\newcommand \eps{\varepsilon}

\newcommand{\f}{\mathcal F}

\newcommand{\pr}[1]{\left(#1\right)}
\newcommand{\norm}[1]{\left\lVert #1 \right\rVert}
\newcommand{\gr}[1]{\bm{#1}}
\newcommand{\gri}{\bm{i}}
\newcommand{\grj}{\bm{j}}
\newcommand{\grn}{\bm{n}}
\newcommand{\grm}{\bm{m}}
\newcommand{\gru}{\bm{u}}

\newcommand{\grk}{\bm{k}}
\newcommand{\grt}{\bm{t}}
\newcommand{\imd}{\preccurlyeq}
\newcommand{\smd}{\succcurlyeq }

%%%%%%%%%%%%%%%%%%%%%%%%%%%%%%%%%%%%%%%%%%%%%%%%%%%%%%%%%%%%%%%%%%%%%%%%
%%%%%%%%%%%%%%%%%%%%%%%%%%%%%%%%%%%%%%%%%%%%%%%%%%%%%%%%%%%%%%%%%%%%%%%%
%%%%%%%%%%%%%%%%%%%%%%%%%%%%%%%%%%%%%%%%%%%%%%%%%%%%%%%%%%%%%%%%%%%%%%%%
\begin{document}
\maketitle 
\begin{abstract}
 In this paper, we establish an exponential inequality for random fields, which 
is applied in the context of convergence rates in the law of large numbers and 
Hölderian weak invariance principle.
\end{abstract}
 
\section{Statement of the exponential inequality}

\subsection{Goal and motivations}
Understanding the asymptotic behavior of sums of random variables 
is an important topic in probability. A useful tool for 
establishing limit 
theorems is to control the probability that the partial sums or the 
maximum of the absolute values of the partial sums exceeds a fixed 
number. A direct 
application can give convergence rates in the law of large 
numbers, and such an inequality 
can be used to check tightness criteria in some functional spaces. 

In this paper, we will be concerned by establishing
and applying an exponential inequality 
for the so-called orthomartingale difference random fields 
introduced in \cite{MR0254912}. 
We make an assumption of invariance in law of the partial sums, 
which is weaker than stationarity. 
We control the tails of maximum of partial sums on rectangles 
of $\N^d$ by two quantities:
an exponential term and another one involving the common
distribution function of the increments. We will then formulate two 
applications of the result. One will deal with the convergence rates in the 
strong law of 
large numbers. The second one will be about the invariance 
principle in Hölder spaces, which will be restricted to the case of stationary 
processes for technical reasons. 

Orthomartingales are well-suited for the 
summation over rectangles of $\Z^d$ because 
we can use a one dimensional martingale property when 
we sum according to each fixed coordinate. Moreover, 
the approximation of stationary random fields by 
this class of martingales can be done, like 
in \cite{MR3798239,MR4166203,MR3504508,MR3869881}. 
However, for rates in the law of large numbers and the functional central limit 
theorem in Hölder spaces, only a few results 
are available in the literature. Indeed, rates on the law 
of large number for orthomartingales with polynomial moment 
have been given in \cite{MR2794415,MR4046858,MR3451971}, but 
it seems that the question of exponential moments 
was only addressed for martingale difference sequences. 
For the functional central limit theorem in Hölder spaces 
for random fields, the i.i.d.\ case was addressed 
for moduli of regularity of the form $t^\alpha$, $0<\alpha<1/2$ 
in the case of random fields, and for more general moduli 
in the case of sequences. It turns out that the inequality 
we will present in this paper is a well-suited tool for dealing 
with these problems.

A key ingredient for the aforementioned limit theorems is 
an exponential inequality for orthomartingale random fields, 
that is, an inequality putting into play an exponential term 
and the tail of the common distribution of the increments. 
At first glance, it seems that multi-indexed martingales 
could be treated like sequences, up to some technical and 
notational obstacles. However, it turns out that the standard 
tools for proving deviation inequalities for martingales, like 
martingale transforms or uses of exponential supermartingale, 
do not extend easily. Nevertheless, it is possible to apply induction arguments 
when the random field satisfies good 
properties. 

The paper is organized as follows: in Subsection~\ref{subsec:expo_ineg}, we 
state the definition of orthomartingale difference random fields and an 
exponential inequality for such 
random fields, with an assumption of invariance of the law of 
the sum on rectangles by translation. Subsections~\ref{subsec:LLN} 
(respectively \ref{subsc:HWIP}) provide an application to the 
rates in the strong law of large numbers (respectively, to 
the invariance principle in Hölder spaces). Section~\ref{sec:proofs} is devoted 
to the proof of the previously stated 
results.

\subsection{Exponential inequality for orthomartingales}
\label{subsec:expo_ineg}

We start by defining the concept of orthomartingale. Before that, 
we need to introduce the notion of filtration with respect to the 
coordinatewise order $\imd$: we say that for $\gri,\grj\in\Z^d$, 
$\gri\imd\grj$ if $i_q\leq j_q$ for each $q\in \ens{1,\dots,
d}=:[d]$.

\begin{Definition}
A filtration indexed by $\Z^d$ on a probability 
space $\pr{\Omega,\f,\PP}$ is a collection of 
$\sigma$-algebras $\pr{\f_{\gri}}_{\gri\in \Z}$ such that 
for each $\gri\imd\grj$, the inclusion $\f_{\gri}\subset 
\f_{\grj}$ takes place.
\end{Definition}
\begin{Definition}
We say that the filtration
$\pr{\f_{\gri}}_{\gri\in \Z}$ is commuting if for all 
integrable random variable $Y$ and all $\gri$, $\grj\in\Z$, the equality 
\begin{equation}
\E{\E{Y\mid \f_{\gri}}\mid\f_{\grj}}
= \E{Y\mid \f_{\min\ens{\gri,\grj}}},
\end{equation}
takes place, where the minimum is taken coordinatewise.
\end{Definition}

\begin{Example}
Let $\pr{\eps_{\grj}}_{\grj\in\Z^d}$ be an i.i.d.\ random 
field and let $\f_{\grj}:=\sigma\pr{\eps_{\gri},\gri\imd\grj}$. 
Then the filtration  $\pr{\f_{\gri}}_{\gri\in \Z}$ is commuting.
\end{Example}
\begin{Example}
Let $\pr{\eps_k^{\pr{q}}}_{k\in\Z}$, $q\in [d]:=\ens{1,\dots,d}$ be independent 
copies of an i.i.d.\ sequence $\pr{\eps_k}_{k\in\Z}$ and let 
$\f_{\grj}:=\sigma\pr{\eps_{k_q}^{\pr{q}},k_q\leq j_q,1\leq q\leq d}$. Then the 
filtration  $\pr{\f_{\gri}}_{\gri\in \Z}$ is commuting, and connected to 
decoupled $U$-statistics.
\end{Example}

\begin{Definition}
 We say that the random field $\pr{X_{\gri}}_{\gri\in \Z^d}$ 
 is an orthomartingale difference random field 
 with respect to the commuting 
 filtration $\pr{\f_{\gri}}_{\gri\in\Z^d}$ if for all 
 $\gri\in\Z^d$, $X_{\gri}$ is integrable, 
 $\f_{\gri}$-measurable and 
 \begin{equation}
  \E{X_{\gri}\mid \f_{\gri-\gr{e}_q}}=0,
 \end{equation}
where for $q\in [d]$,  $\gr{e}_q$ is the $q$-th vector of the canonical 
basis of $\R^d$.
\end{Definition}

\begin{Theorem}\label{thm:deviation_inequality_orthomartingale}
 Let $\pr{X_{\gri}}_{\gri\in\Z^d}$ be an orthomartingale 
 differences random field 
 such that for all $\grn\in\N^d$ and all $\grk\in\Z^d$, 
 \begin{equation}\label{eq:invariance_loi_sommes}
  S_{\grn}:=\sum_{\gr{1}\imd\gri\imd\grn}
  X_{\gri}\mbox{ and } \sum_{\gr{1}\imd\gri\imd\grn}
  X_{\gri+\grk}\mbox{ have the same distribution}.
 \end{equation}
Then the following inequality holds for all $x,y>0$:
\begin{multline}\label{eq:exp_deviation_inequality_orthomartingale}
 \PP\ens{\max_{\gr{1}\imd\gri\imd\gr{n}  } \abs{S_{\gri}  }
 >x\abs{\grn}^{1/2}}\leq A_d\exp\pr{-\pr{\frac{x}{y}}^{2/d}}
 \\+B_d\int_1^{+\infty}\PP\ens{\abs{X_{\gr{1}}}>yuC_d}
 u\pr{\log\pr{1+u}}^{p_d}du,
\end{multline}
where $A_d$, $B_d$ and $C_d$ depend only on $d$,  
$p_d=2d$ and $\abs{\grn}=\prod_{q=1}^dn_q$.
\end{Theorem}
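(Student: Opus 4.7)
The plan is to reduce to the bounded-increment case by a dyadic truncation, and then, for bounded increments, to combine Cairoli's maximal inequality for orthomartingales with the Burkholder--Rosenthal inequality applied iteratively along each coordinate direction. Concretely, for an orthomartingale whose differences satisfy $\abs{X_{\gri}}\leq y$, I would first use Cairoli's inequality (available here because the filtration is commuting) to bound $\norm{\max_{\gri\imd\grn}\abs{S_{\gri}}}_{2p}$ by $C_d\,\norm{S_{\grn}}_{2p}$. Then, viewing $S_{\grn}$ as a one-dimensional martingale in the last coordinate whose increments are themselves $(d-1)$-parameter orthomartingale sums, I would apply Burkholder--Rosenthal in that direction and iterate the argument $d$ times to obtain an estimate of the form
\begin{equation*}
\norm{\max_{\gri\imd\grn}\abs{S_{\gri}}}_{2p}\leq C_d\, p^{d/2}\,y\,\abs{\grn}^{1/2}.
\end{equation*}
Markov's inequality with the optimal choice $p\asymp \pr{x/y}^{2/d}$ then yields the stretched exponential term $\exp\pr{-\pr{x/y}^{2/d}}$ with the correct exponent.

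For the general case, I would perform a dyadic splitting $X_{\gri}=X_{\gri}'+\sum_{k\geq 0}X_{\gri}^{\pr{k}}$ where $X_{\gri}'=X_{\gri}\mathbf{1}\ens{\abs{X_{\gri}}\leq yC_d}$ and $X_{\gri}^{\pr{k}}$ is supported on the shell $\ens{2^k yC_d<\abs{X_{\gri}}\leq 2^{k+1} yC_d}$. Each piece must be recentered by subtracting appropriate iterated conditional expectations in order to recover the orthomartingale difference property, the commuting property being used to re-establish the vanishing condition with respect to each of the $d$ filtrations $\pr{\f_{\gri-\gr{e}_q}}_{q\in[d]}$ simultaneously. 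The truncated piece $X_{\gri}'$ is then handled by the bounded-case estimate with threshold $yC_d$. For each shell $X_{\gri}^{\pr{k}}$, a Cairoli-type $L^2$ maximal inequality together with the identity in law \eqref{eq:invariance_loi_sommes} (which transfers to the truncated variables because truncation acts pointwise on each $X_{\gri}$) gives a bound involving $\abs{\grn}\,\E{\abs{X_{\gr{1}}}^2\mathbf{1}\ens{\abs{X_{\gr{1}}}>2^k yC_d}}$. Splitting the deviation level $x$ into pieces $x_k\sim x/(k+1)^{d}$ and summing the resulting estimates over $k$ converts the geometric sum into the integral $\int_1^{+\infty}\PP\ens{\abs{X_{\gr{1}}}>yuC_d}\,u\,\pr{\log\pr{1+u}}^{p_d}du$ with $p_d=2d$ appearing in \eqref{eq:exp_deviation_inequality_orthomartingale}.

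The main obstacle is expected to be the centering step combined with the dyadic apportionment. Truncating an orthomartingale difference does not preserve the orthomartingale property, and in the $d$-parameter setting the correction has to be carried out simultaneously along each of the $d$ directions instead of just one; the commuting assumption on $\pr{\f_{\gri}}$ is essential to guarantee that these correction terms can be expressed as iterated conditional expectations whose tails are still controlled by those of $\abs{X_{\gr{1}}}$. Propagating these corrections through the iterated Burkholder--Rosenthal estimate while tracking the dependence of the constants on $p$ and $d$, and arranging the bookkeeping so that the sum over the dyadic shells indeed yields the weight $\pr{\log\pr{1+u}}^{2d}$ rather than a larger polynomial factor, is the delicate accounting that ultimately pins down the exponents in the final bound.
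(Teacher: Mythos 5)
Your plan is a genuinely different route from the paper's. The paper never truncates: it proves the inequality by induction on $d$, using Doob's inequality on the submartingale $N\mapsto\max_{i_1,\dots,i_{d-1}}\abs{S_{i_1,\dots,i_{d-1},N}}$ to strip the maximum over the last coordinate at the price of one integration, then applying the $(d-1)$-dimensional statement to the row sums $\sum_{i_d}X_{i_1,\dots,i_{d-1},i_d}$, and finally invoking a one-dimensional martingale deviation inequality under convex domination (Proposition~\ref{prop:inegalite_deviation_martingales_dominee}) which already carries both the exponential and the integral term; the weight $\pr{\log\pr{1+u}}^{2d}$ arises from the recursion $p_d=p_{d-1}+2$. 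Your scheme instead splits into a bounded core plus dyadic shells, handles the core by Cairoli plus an iterated square-function inequality with Markov optimized in $p$, and the shells by an $L^2$ Chebyshev bound; this is closer in spirit to the Arcones--Gin\'e treatment of degenerate $U$-statistics and has the merit of making the origin of the exponent $2/d$ (moment growth $p^{d/2}$) completely transparent, at the cost of the recentering bookkeeping --- which, incidentally, is less painful than you anticipate: since the projections $\prod_q\pr{I-\E{\cdot\mid\f_{\gri-\gr{e}_q}}}$ commute and contract $L^2$, the shell pieces only need their second moments, and the bounded piece only suffers a harmless factor $2^d$ in sup-norm. Two points do require care. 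First, the iteration must use the square-function inequality $\norm{f}_p\leq C\sqrt{p}\,\norm{\pr{\sum D_i^2}^{1/2}}_p$ with constant $O\pr{\sqrt p}$ (plus Minkowski in $L^{p/2}$); the Burkholder--Rosenthal form with the extra term $p\norm{\max_i\abs{D_i}}_p$ would degrade the exponent in the regime where the optimal $p$ exceeds the side lengths, and the sharp differential-subordination constant $p-1$ is likewise too large, so the precise version invoked must be sourced. Second, the apportionment $x_k\sim x/(k+1)^d$ does not sum for $d=1$ (and your Chebyshev prefactor $y^2/x^2$ requires the standard reduction to $x/y$ bounded below, which the paper also makes); both are repaired by taking $d=1$ directly from the one-dimensional inequality and reserving the shell argument for $d\geq 2$.
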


\begin{Remark}
Assume that $\pr{X_{\gri}}_{\gri\in\Z^d}$ is bounded, that is, there 
exists a constant $K$ such that $\abs{X_{\gr{i}}}\leq K$ 
almost surely for all $\gri\in\Z^d$. If $x>3^{d/2}K/C_d$ then 
we can choose $y=K/C_d$   and inequality 
\eqref{eq:exp_deviation_inequality_orthomartingale} simplifies 
as 
\begin{equation}
  \PP\ens{\max_{\gr{1}\imd\gri\imd\gr{n}  } \abs{S_{\gri}  }
 >x\abs{\grn}^{1/2}}\leq A_d\exp\pr{-\pr{\frac{C_dx}{K}}^{2/d}}.
\end{equation}

\end{Remark}

\begin{Remark}
The exponent $2/d$ in the exponential term 
of \eqref{eq:exp_deviation_inequality_orthomartingale}
is not improvable, even in the bounded case. To see this, 
consider $d$ i.i.d.\ sequences $\pr{\eps_i^{\pr{q}}}_{i\in\Z}$ 
which are independent of each other, in the sense  
that the collection $\ens{\eps_{i_q}^{\pr{q}}, 
i_q\in\Z}$ is independent. Assume that $\eps_0^{\pr{q}}$ 
takes the values $1$ and $-1$ with probability $1/2$ for all 
$q\in [d]$. Let $X_{\gri}=\prod_{q=1}^d\eps_{i_q}^{\pr{q}}$. 
Then 
\begin{equation}
  \PP\ens{\max_{\gr{1}\imd\gri\imd\gr{n}  } \abs{S_{\gri}  }
 >x\abs{\grn}^{1/2}}\geq
  \PP\ens{ \abs{S_{\grn}  }
 >x\abs{\grn}^{1/2}}=
 \PP\ens{ \abs{\prod_{q=1}^d
 \frac 1{\sqrt{n_q}}\sum_{i_q=1}^{n_q}\eps_{i_q}^{\pr{q}} 
 }
 >x }.
\end{equation}
The vector $\pr{\frac 
1{\sqrt{n_q}}\sum_{i_q=1}^{n_q}\eps_{i_q}^{\pr{q}}}_{q=1}^d$ converges in 
distribution to 
$\pr{N_q}_{q=1}^d$, where $\pr{N_q}_{q=1}^d$ is independent 
and each $N_q$ has a standard normal distribution. Therefore, 
if $f\pr{x}$ is a function such that for each bounded by $1$
orthomartingale difference random field 
satisfying \eqref{eq:invariance_loi_sommes} and each $x>0$
\begin{equation}
 \PP\ens{\max_{\gr{1}\imd\gri\imd\gr{n}  } \abs{S_{\gri}  }
 >x\abs{\grn}^{1/2}}\leq f\pr{x},
\end{equation}
then letting $Y:= \prod_{q=1}^d\abs{N_q}$,
the following inequality should hold for all $x$:
$\PP\ens{Y>x}\leq f\pr{x}$. Since the $\mathbb L^p$-norm
of $Y$ behaves like $p^{d/2}$, the function $f$ cannot decay 
quicker than $\exp\pr{-Kx^{\gamma}}$ for some $K>0$ and $\gamma>2/d$.
\end{Remark}

\begin{Remark}
 The $\log$ factor in the right hand side of 
\eqref{eq:exp_deviation_inequality_orthomartingale} appears 
naturally as iterations of weak-type estimates of the form 
$x\PP\ens{X>x}\leq \E{X\mathbf{1}\ens{Y>x}}$ for some random variable $Y$, 
giving a control of the tail of $X$ in terms of that of $Y$. Consequently, the 
$\log$
factor does not seem to be avoidable with this method of proof. 
We do not know whether this factor can be removed.
\end{Remark}

\section{Application to limit theorems}

\subsection{Convergence rates in the law of large numbers}
\label{subsec:LLN}
A centered sequence $\pr{X_i}_{i\geq 1}$ satisfies the law of large 
numbers if the sequence $\pr{n^{-1}\sum_{i=1}^nX_i}_{i\geq 1}$ 
converges almost surely to zero. This is for example the 
case of a strictly stationary ergodic sequence where 
$X_1$ is integrable and centered. Then arises the question of 
evaluating the speed of convergence and finding 
bounds for the large deviation probabilities, 
namely, 
\begin{equation}
 \PP\ens{\frac 1n\abs{ \sum_{i=1}^nX_i}
 >x}.
\end{equation}
This question has been treated in the independent case 
in \cite{MR19852,MR79851,MR30714} under conditions 
on the $\mathbb L^p$-norm of $X_i$. The case of martingale 
differences has also been addressed: under boundedness of 
moments of order $p$ and a Cramer-type condition 
$\sup_{i\geq 1}\E{\exp\pr{\abs{X_i}}}<+\infty$ in 
\cite{MR1856684}, a conditional Cramer condition in 
\cite{MR2568267} and under finite exponential 
moments in \cite{MR3005732}.

For random fields, we can consider large 
deviation probabilities defined by 
\begin{equation}
 \PP\ens{\frac 1{\abs{\gr{N}}}\abs{  \sum_{\gr{1}\imd 
 \grn\imd\gr{N}} X_{\gri}}
 >x}, \gr{N} \smd\gr{1}, x>0.
\end{equation}
Result for orthomartingales with polynomial moment 
have been given in \cite{MR2794415,MR4046858,MR3451971}.

\begin{Theorem}\label{thm:large_deviation}
Let $\pr{X_{\gri}}_{\gri\in\Z^d}$ be an orthomartingale 
difference random field satisfying \eqref{eq:invariance_loi_sommes}. Suppose 
that for some 
$\gamma>0$, 
\begin{equation}
 \sup_{s>0}\exp\pr{s^\gamma}\PP\ens{\abs{X_{\gr{1}}}>s}\leq 2.
\end{equation}
Then for each positive $x$, the following inequality takes 
place:
\begin{equation}
  \PP\ens{\frac 1{\abs{\gr{N}}}\abs{  \sum_{\gr{1}\imd 
 \gr{n}\imd\gr{N}} X_{\gri}}
 >x}\leq C_{1,d,\gamma}\exp\pr{-C_{2,d,\gamma}
 \abs{\gr{N}}^{\frac{\gamma}{2+d\gamma}}x^{\frac{2\gamma}{2+d\gamma}}  },
\end{equation}
where $C_{1,d,\gamma}$ and $C_{2,d,\gamma}$ depend only on $d$
 and $\gamma$.
\end{Theorem}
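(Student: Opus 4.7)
The plan is to apply Theorem~\ref{thm:deviation_inequality_orthomartingale} with a free parameter $y$ that is tuned so as to balance the exponential term and the integral term arising from the Weibull tail bound.

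First I would rewrite the event in the form suited to the exponential inequality, bounding
\begin{equation*}
\PP\ens{\frac{1}{\abs{\gr{N}}}\abs{\sum_{\gr{1}\imd \gri\imd \gr{N}} X_{\gri}} > x}
\leq \PP\ens{\max_{\gr{1}\imd \gri\imd \gr{N}} \abs{S_{\gri}} > \tilde x \abs{\gr{N}}^{1/2}}
\end{equation*}
with $\tilde x := x\abs{\gr{N}}^{1/2}$, and then applying \eqref{eq:exp_deviation_inequality_orthomartingale}. Using the Weibull tail bound $\PP\ens{\abs{X_{\gr{1}}} > s} \leq 2\exp\pr{-s^\gamma}$ derived from the assumption, this gives the upper bound
\begin{equation*}
A_d \exp\pr{-\pr{\tilde x/y}^{2/d}} + 2B_d \int_1^\infty \exp\pr{-\pr{yu C_d}^\gamma} u\pr{\log(1+u)}^{2d}\, du.
\end{equation*}

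Next I would choose $y$ so that the two exponents $\pr{\tilde x/y}^{2/d}$ and $\pr{yC_d}^\gamma$ coincide up to a multiplicative constant depending only on $d$ and $\gamma$. Solving $\pr{\tilde x/y}^{2/d} = \pr{yC_d}^\gamma$ gives $y = c_{d,\gamma}\, \tilde x^{2/(2+d\gamma)}$, with which both exponents become a constant multiple of
\begin{equation*}
\tilde x^{2\gamma/(2+d\gamma)} = \pr{x \abs{\gr{N}}^{1/2}}^{2\gamma/(2+d\gamma)} = \abs{\gr{N}}^{\gamma/(2+d\gamma)} x^{2\gamma/(2+d\gamma)},
\end{equation*}
which is precisely the rate in the statement.

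The remaining technical point is to bound the integral by a constant times $\exp\pr{-\pr{yC_d}^\gamma}$. Setting $c := \pr{yC_d}^\gamma$ and splitting $\exp(-cu^\gamma)\leq \exp(-c/2)\exp(-cu^\gamma/2)$ for $u\geq 1$, in the range $c\geq 1$ one has $\exp(-cu^\gamma/2)\leq \exp(-u^\gamma/2)$, so the integral is bounded by $K_{d,\gamma}\exp(-c/2)$ with $K_{d,\gamma}$ finite. In the complementary regime $c<1$, the quantity $\abs{\gr{N}}^{\gamma/(2+d\gamma)} x^{2\gamma/(2+d\gamma)}$ is uniformly bounded by a constant depending only on $d$ and $\gamma$, so the claimed inequality is trivial once $C_{1,d,\gamma}$ is taken large enough. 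The only real work is the bookkeeping of constants through this optimization; no ingredient beyond Theorem~\ref{thm:deviation_inequality_orthomartingale} is needed.
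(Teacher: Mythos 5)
Your proposal is correct and follows exactly the paper's route: apply Theorem~\ref{thm:deviation_inequality_orthomartingale} with $y$ chosen as (a constant times) $\abs{\gr{N}}^{1/(2+d\gamma)}x^{2/(2+d\gamma)}$ so that the exponential term and the integral term (bounded by a constant times $\exp\pr{-\pr{yC_d}^\gamma}$) contribute comparably, both yielding the exponent $\abs{\gr{N}}^{\gamma/(2+d\gamma)}x^{2\gamma/(2+d\gamma)}$. Your write-up in fact supplies more detail than the paper's two-sentence sketch, including the correct handling of the trivial regime where the exponent is bounded.
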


\subsection{Application to Hölderian invariance principle}
\label{subsc:HWIP}
Given a sequence of random variables $\pr{X_i}_{i\geq 1}$, 
a way to understand the asymptotic behavior of 
the partial sums given by $S_k=\sum_{i=1}^kX_i$ is to 
define a sequence of random functions $\pr{W_n}_{n\geq 1}$ 
on the unit interval $[0,1]$ in the following way: $W_n\pr{k/n}
=n^{-1/2}S_k$ for $k\in\ens{1,\dots,n}$, $W_n\pr{0}=0$ and on 
$\pr{\pr{k-1}/n,k/n}$, $W_n$ is linearly interpolated. 
When the sequence $\pr{X_i}_{i\geq 1}$ is i.i.d., centered 
and has unit variance, the sequence $\pr{W_n}_{n\geq 1}$ 
converges in law in $C[0,1]$ to a standard Brownian motion. The 
result has been extended 
to strictly stationary martingale difference sequences 
in \cite{MR0126871,MR0151997}. Then numerous papers 
treated the case of weakly dependent strictly stationary 
sequence, see \cite{MR2206313} and the references 
therein for an overview. 

There are also two other possibilities of extension of 
such result. The first one is to consider other 
functional spaces, in order to establish the convergence 
of $\pr{F\pr{W_n}}_{n\geq 1}$ for a larger class 
of functionals than the continuous functionals  
$F\colon C[0,1]\to\R$. In other words, we view 
$W_n$ as an element of an element of a Hölder 
space $\mathcal H_\rho$ with modulus of regularity 
$\rho$ and investigate the convergence of 
$\pr{W_n}_{n\geq 1}$ in this function space. 
Applications to epidemic changes can be given, for 
example in \cite{MR2265057}. The question of 
the invariance principle in Hölder spaces has 
been treated for i.i.d.\ sequences for modulus of 
regularity of the form $t^\alpha$ in \cite{MR2000642}
and also for more general ones, of the form $t^{1/2}\log
\pr{ct}^\beta$ in \cite{MR2054586}. Some results are also 
available for stationary weakly dependent sequences, 
for example mixing sequences \cite{MR1759810,MR3615086} 
or by the use of a martingale approximation 
\cite{MR3426520,MR3770868}.

A second development of these invariance principles 
is the consideration of partial sum processes built on 
random fields. Given a random field $\pr{X_{\gri}}_{\gri\in 
\Z^d}$, one can define the following random function 
on $[0,1]^d$:
\begin{equation}
 W_{\grn}\pr{\grt}=\frac 1{\sqrt{\abs{\grn}}}
 \sum_{\gri \in \Z^d}
 \lambda\pr{R_{\gri}\cap \prod_{q=1}^d
 [0,n_qt_q]}
 X_{\gri}, \grn\smd\gr{1}, \grt \in [0,1]^d,
\end{equation}
where $\lambda$ denotes the Lebesgue measure on 
$\R^d$ and $R_{\gri}=\prod_{q=1}^d\pr{
i_q-1,i_q}$. Notice that the map $\grt\mapsto 
W_{\grn}\pr{\grt}$ is $1$-Lipschitz continuous and that 
$W_{\grn}\pr{\pr{\frac{k_q}{n_q}}_{q=1}^d}=S_{k_1,\dots,k_d}/\sqrt{\abs{\grn}}$ 
hence $W_{\grn}$ takes into account the values of all the partial 
sums $S_{\grk}$ for $\gr{1}\imd\gr{k}\imd\grn$.

Convergence of $W_{\grn}$ in the space of continuous functions 
has been investigated in \cite{MR0246359} for an i.i.d.\ random field, 
\cite{MR1875665} for martingales for the lexicographic order 
and in \cite{MR3427925,MR3913270} for orthomartingales. 

In this section, we will study the convergence of 
$W_{\grn}$ in some Hölder spaces for orthomartingales 
random fields. First, an observation is that for 
$\grt,\gr{t'}\in [0,1]^d$, 
\begin{equation}
\abs{ \lambda\pr{R_{\gri}\cap \prod_{q=1}^d
 [0,n_qt_q]}- \lambda\pr{R_{\gri}\cap \prod_{q=1}^d
 [0,n_qt'_q]}}\leq \max_{1\leq q\leq d}n_q 
 \abs{t_q-t'q}\leq \abs{\grn}\norm{\grt-\gr{t'}}_\infty
\end{equation}
hence 
\begin{equation}
 \abs{W_{\grn}\pr{\grt}-W_{\grn}\pr{\gr{t'}}}
 \leq \sqrt{\abs{\grn}} \norm{\grt-\gr{t'}}_\infty
 \sum_{\gr{1}\imd \gri \imd\grn}\abs{X_i}
\end{equation}
hence for almost every $\omega$, the map 
$\grt\mapsto W_{\grn}\pr{\grt}\pr{\omega}$ is 
Lipschitz-continuous. As pointed out in the i.i.d.\ case in  
\cite{MR0246359}, the finite dimensional distributions  
of $W_{\grn}$ converge to those of a standard Brownian 
sheet, that is, a centered Gaussian process $\pr{W\pr{\grt}}_{\grt 
\in [0,1]^d}$ whose covariance function is given by 
\begin{equation}
 \operatorname{Cov}\pr{W\pr{\grt},W\pr{\gr{t'}}}
 =\prod_{q=1}^d\min\ens{t_q,t'_q},
\end{equation}
provided that $X_{\gr{1}}$ is centered and has unit variance.
Given an increasing continuous function $\rho\colon [0,1]\to\R$ 
such that $\rho\pr{0}=0$, 
let $\Hca_\rho$ be the space of all the functions $x$ from 
$[0,1]^d$ to $\R$ such that $\sup_{\grt,\gr{t'}\in [0,1]^d,
\grt\neq\gr{t'}}\abs{x\pr{\grt}-x\pr{\gr{t'}}}/
\rho\pr{\norm{\grt-\gr{t'}}_\infty }$ is finite.

The Brownian sheet has trajectories in $\Hca_\rho$ with $\rho\pr{h}
=h^\alpha$ for all $\alpha \in\pr{0,1/2}$ but not for $\alpha\geq 1/2$. 
Therefore, it is not possible to expect to show the convergence of 
$\pr{W_{\grn}}_{\grn\smd\gr{1}}$ in all the possible Hölder spaces 
and some restriction have to be made. In order to treat a 
class of modulus of regularity larger than the power function, 
we need to introduce the slowly varying functions. We say that 
a function $L\colon \pr{0,+\infty}\to \pr{0,+\infty}$ is slowly 
varying if for all $c>0$, the quantity $L\pr{ct}/L\pr{t}$ goes 
to $1$ as $t$ goes to infinity. For example, functions which 
behave asymptotically as a power of the logarithm are slowly varying.
We now define the moduli of regularity and the 
associated Hölder spaces of interest from the point of view of 
the convergence of the partial sum process $W_{\grn}$.

\begin{Definition}
 Let $d\geq 1$ be an integer. We say that $\rho$ 
 belongs to the class $\Rca_{1/2,d}$ if there exists 
 a slowly varying function $L$ such that 
 $L\pr{t}\to \infty$ as $t$ goes to infinity
 and a constant $c$ such that 
 \begin{equation}\label{eq:def_de_rho}
  \rho\pr{h}=h^{1/2}\pr{\ln\pr{\frac{c}{h}}}^{d/2}L\pr{\frac 1h}, h 
  \in [0,1],
 \end{equation}
and $\rho $ is increasing on $[0,1]$.
\end{Definition}
It seems that the exponent $d/2$ is the best we can get in view of 
proving tightness via the deviation inequality we established. It may not be 
optimal in some cases, for example, if $\pr{X_{\gri}}_{\gri\in\Z^d}$ is 
centered, we can get a similar inequality as 
\eqref{eq:exp_deviation_inequality_orthomartingale}, but with the 
exponent $2$ instead of $2/d$ in the right hand side. As a consequence, 
tightness in $\Hca_{\rho}$ can  be established for $\rho\in\Rca_{1/2,1}$, a  
larger class than $\rho\in\Rca_{1/2,d}$

We now give a sufficient condition for tightness of the partial sum process 
associated to a strictly stationary random field, that is, a random field 
$\pr{X_{\gri}}_{\gri\in\Z^d}$ such that for each integer $N$ and each 
$\gr{i_1},\dots,\gr{i_N},\gr{j}\in\Z^d$, the vectors 
$\pr{X_{\gr{i_k}+\grj}}_{k=1}^N$ and $\pr{X_{\gr{i_k}}}_{k=1}^N$ 
have the same distribution.
The criterion puts into play tails of the maximum of partial 
sums of the rectangles.
 
\begin{Proposition}\label{prop:tightness_partial_sums}
Let $d\geq 1$ and $\rho$ be an element of $\Rca_{1/2,d}$. 
If $\pr{X_{\gri}}_{\gri\in\Z^d}$ is a strictly stationary random field such that 
for each $q\in \ens{1,\dots,d}$ 
 and each positive $\varepsilon$,
\begin{equation}\label{eq:tightness_random_fields_sufficient_cond}
 \lim_{J\to \infty}\limsup_{\min\grm\to \infty}
 \sum_{j=J}^{m_q}2^j \PP\ens{\max_{\gr{1}\imd\gr{k}\imd 2^{\grm -m_q\gr{e}_q } }
  \abs{\sum_{\gr{1}\imd\gri\imd\grk} X_{\gri}   }> \varepsilon\rho\pr{2^{-j}}
  \prod_{u=1}^d2^{m_u/2}}=0,
\end{equation}
then the net $\left(W_{\grn}\right)_{\grn \smd \gr{1}}$
is asymptotically tight in $\mathcal H_{\rho}([0,1]^d)$ as $\min\grn=
\min_{1\leq q\leq d}n_q\to\infty$.
\end{Proposition}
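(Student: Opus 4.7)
Proof proposal. The plan is to reduce tightness in $\Hca_\rho([0,1]^d)$ to the dyadic criterion \eqref{eq:tightness_random_fields_sufficient_cond} via a coordinate-by-coordinate telescoping of the increments of $W_{\grn}$, combined with strict stationarity to anchor each slab sum at $\gr{1}$.

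First, I would invoke the standard criterion for tightness in a Hölder space. Since $W_{\grn}(\gr{0})=0$, it is enough to show that for every $\varepsilon>0$,
\begin{equation*}
\lim_{J\to\infty}\limsup_{\min\grn\to\infty}\PP\ens{\sup_{\grt\ne\gr{t'},\,\norm{\grt-\gr{t'}}_\infty\leq 2^{-J}}\frac{\abs{W_{\grn}(\grt)-W_{\grn}(\gr{t'})}}{\rho\pr{\norm{\grt-\gr{t'}}_\infty}}>\varepsilon}=0.
\end{equation*}
Because $W_{\grn}$ is affine between consecutive grid points in each coordinate and because $\rho$ is doubling near zero by \eqref{eq:def_de_rho}, this supremum is controlled, up to a factor depending only on $d$, by the analogous supremum restricted to dyadic $\grt,\gr{t'}$; after the harmless replacement of $\grn$ by the nearest dyadic vector one may assume $n_q=2^{m_q}$ for all $q\in[d]$.

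Next, for dyadic $\grt,\gr{t'}$ at scale $2^{-j}$ with $\norm{\grt-\gr{t'}}_\infty\leq 2^{-j}$, I would interpolate coordinate by coordinate: set $\grt^{(0)}=\grt$, let $\grt^{(q)}$ differ from $\grt^{(q-1)}$ only in its $q$-th coordinate, and let $\grt^{(d)}=\gr{t'}$. The triangle inequality yields
\begin{equation*}
\abs{W_{\grn}(\grt)-W_{\grn}(\gr{t'})}\leq\sum_{q=1}^d\abs{W_{\grn}\pr{\grt^{(q-1)}}-W_{\grn}\pr{\grt^{(q)}}}.
\end{equation*}
A direct expansion from the definition of $W_{\grn}$ shows that each summand equals $\abs{\grn}^{-1/2}$ times a sum $\sum X_{\gri}$ of the field over a rectangle forming a slab of the appropriate dyadic width in the $q$-th direction and running, in each direction $u\ne q$, from coordinate $1$ to some integer $K_u\leq 2^{m_u}$ determined by the dyadic position of $\grt^{(q-1)}$. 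Strict stationarity allows the slab to be translated so that it starts at $\gr{1}$ in direction $q$ as well, and maximising over the $(K_u)_{u\ne q}$ at a fixed slab position is then dominated by $\max_{\gr{1}\imd\grk\imd 2^{\grm-m_q\gr{e}_q}}\abs{\sum_{\gr{1}\imd\gri\imd\grk}X_{\gri}}$, the random variable appearing in \eqref{eq:tightness_random_fields_sufficient_cond}.

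A final union bound over $q\in[d]$, over $j\geq J$ and over the $O(2^j)$ dyadic positions of the slab in direction $q$, with $\varepsilon$ split into $d$ equal pieces, produces a bound of the form
\begin{equation*}
\sum_{q=1}^d\sum_{j=J}^{m_q}C_d\cdot 2^j\,\PP\ens{\max_{\gr{1}\imd\grk\imd 2^{\grm-m_q\gr{e}_q}}\abs{\sum_{\gr{1}\imd\gri\imd\grk}X_{\gri}}>\frac{\varepsilon}{dC_d}\rho(2^{-j})\prod_{u=1}^d 2^{m_u/2}},
\end{equation*}
which tends to $0$, first as $\min\grn\to\infty$ and then as $J\to\infty$, by \eqref{eq:tightness_random_fields_sufficient_cond}. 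The main obstacle lies in the first step: one must carefully pass from the continuous Hölder modulus of the affine interpolant $W_{\grn}$ to a clean dyadic statement on the multidimensional grid with the correct normalization, handling the non-dyadic $\grn$ by comparison with the next dyadic vector. Once this reduction is in place, the coordinate-wise telescoping and the use of stationarity are routine, and \eqref{eq:tightness_random_fields_sufficient_cond} is precisely tailored to match the resulting slab-tail estimate.
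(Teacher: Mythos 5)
Your overall architecture (reduce to dyadic increments, telescope coordinate by coordinate, use stationarity to anchor the slab at $\gr{1}$, union bound over scales and positions) is the same as the paper's, but there is a genuine gap at the heart of the reduction. You claim that ``a direct expansion from the definition of $W_{\grn}$ shows that each summand equals $\abs{\grn}^{-1/2}$ times a sum $\sum X_{\gri}$ of the field over a rectangle forming a slab.'' This is false: because of the weights $\lambda\pr{R_{\gri}\cap\prod_{q=1}^d[0,n_qt_q]}$, the increment $W_{\grn}(\grt^{(q-1)})-W_{\grn}(\grt^{(q)})$ is a \emph{weighted} sum with fractional weights on the cells meeting the boundary of the slab, and when $t'-t<1/n_q$ the integer slab is empty while the increment is not. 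The correct statement is Lemma~\ref{lem:Rac_Suq_Zem} (Lemma~11 of \cite{MR2340883}): the increment is bounded by the full slab sum \emph{plus} a second, single-layer term $3^d\min\ens{1,n_q(t'-t)}\max_{i_q}\max_{\gr{k'}}\abs{\sum X_{\gri}}$. Handling this second term is not routine and occupies a substantial part of the paper's proof: one must show $\sup_{j\geq J}\rho\pr{2^{-j}}^{-1}\min\ens{1,n_d2^{-j}}\leq c_\rho\rho\pr{1/n_d}^{-1}$ (using monotonicity of $\rho\pr{2^{-j}}^{-1}$ and of $\rho\pr{2^{-j}}^{-1}2^{-j}$), then re-embed the resulting single-layer maximum into the family of maxima appearing in \eqref{eq:tightness_random_fields_sufficient_cond} with a union bound of size $n_d$ rather than $2^j$. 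Your proposal omits this term entirely, so the chain of estimates does not close.

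A secondary weakness: your first step, passing from the continuous Hölder modulus of the affine interpolant to dyadic pairs ``up to a factor depending only on $d$'' because ``$\rho$ is doubling,'' is asserted rather than proved, and doubling of $\rho$ is not by itself sufficient. The paper handles this by the Schauder decomposition of $\Hca_\rho^o$, the equivalence of the sequential norm $\norm{\cdot}_\rho^{\mathrm{seq}}$ with $\norm{\cdot}_\rho$ (\cite{MR2141332}), and the tightness criterion of Theorem~6 in \cite{MR2340883}, whose first condition (asymptotic tightness of $W_{\grn}(\grt)$ at each dyadic $\grt$) also has to be checked from \eqref{eq:tightness_random_fields_sufficient_cond}; your formulation leaves that implicit. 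These points are fixable by citation, but the missing boundary-layer term is a real gap in the argument.
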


It turns out that it is more convenient to consider maximum indexed 
by dyadic elements of $\Z^d$. In order to avoid confusion, we will 
use notations of the form $2^{m_k}$ instead of $2^{n_k}$.

A similar tightness criterion was used in \cite{MR3615086} 
for sequences, giving optimal results for mixing sequences and allowing to 
recover the optimal result for i.i.d.\ sequences. For random fields, 
the inequality \eqref{eq:exp_deviation_inequality_orthomartingale} 
we obtained and that in Theorem~1.13 of \cite{MR4046858} are appropriate tools 
to check \eqref{eq:tightness_random_fields_sufficient_cond}.

No other assumption than stationarity is done but of course, some dependence 
will be required for this condition to be 
satisfied since one need a good control of the tails of the partial sums on 
rectangles normalized by the square root of the 
number of elements in the rectangle.

Now, we state a result for the weak convergence of 
$\pr{W_{\grn}}_{\grn\smd\gr{1}}$ in the space $H_\rho$, 
where $\rho\in\Rca_{1/2,d}$. 

\begin{Theorem}
\label{thm:WIP_Holder}
Let $d\geq 1$ and let $\pr{X_{\gri}}_{\gri\in\Z^d}$ be a 
strictly stationary orthomartingale difference 
random field. Let $\rho$ be an element of $\Rca_{1/2,d}$ 
be given by \eqref{eq:def_de_rho}, where $L$ is slowly 
varying.

Suppose that 
\begin{equation}\label{eq:cond_suff_WIP}
\forall A>0, \quad\sum_{j\geq 1}2^j\PP\ens{\abs{X_{\gr{1}}}>L\pr{2^j}A}<\infty.
\end{equation}
Then $\pr{W_{\grn}}_{\grn\smd\gr{1}}$ is asymptotically 
tight in $\Hca_\rho\pr{[0,1]^d}$ as $\min\grn=
\min_{1\leq q\leq d}n_q\to\infty$.

Assume moreover that
any $A\in\mathcal B\pr{\R^{\Z^d}}$ such that 
$\ens{\pr{X_{\gri}}_{\gri\in\Z^d}\in 
A}=\ens{\pr{X_{\gri+\gr{e}_d}}_{\gri\in\Z^d}\in A}$ satisfies
$\PP\ens{\pr{X_{\gri}}_{\gri\in\Z^d}\in A}\in\ens{0,1}$.
Then $\pr{W_{\grn}}_{\grn\smd\gr{1}}$ converges 
to $\norm{X_{\gr{0}}}_2 W$ in
$\Hca_\rho\pr{[0,1]^d}$ as $\min\grn=
\min_{1\leq q\leq d}n_q\to\infty$, where $W$ is a standard Brownian sheet.
\end{Theorem}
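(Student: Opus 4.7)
The plan is to split the proof into tightness in $\Hca_\rho\pr{[0,1]^d}$, handled via Proposition~\ref{prop:tightness_partial_sums} together with the exponential inequality of Theorem~\ref{thm:deviation_inequality_orthomartingale}, and convergence of the finite-dimensional distributions, which under the $\gr{e}_d$-shift ergodicity assumption reduces to a central limit theorem for strictly stationary orthomartingales.

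For tightness, I fix $q\in[d]$ and $\eps>0$, and apply Theorem~\ref{thm:deviation_inequality_orthomartingale} to $\grn=2^{\grm-m_q\gr{e}_q}$ with $x=\eps\rho\pr{2^{-j}}2^{m_q/2}$ and $y=AL\pr{2^j}/C_d$, where $A=A(\eps,d)$ is to be chosen small. Since $\rho\pr{2^{-j}}\asymp 2^{-j/2}j^{d/2}L\pr{2^j}$, a direct computation gives $\pr{x/y}^{2/d}\geq a_\eps\,j\,2^{(m_q-j)/d}$ with $a_\eps$ of order $A^{-2/d}$, and choosing $A$ so that $a_\eps>\log 2$ yields the geometric estimate
\[
\sum_{j=J}^{m_q}2^j A_d \exp\pr{-(x/y)^{2/d}}\lesssim \exp\pr{-J(a_\eps-\log 2)},
\]
which vanishes as $J\to\infty$ uniformly in $\grm$.

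The integral term in \eqref{eq:exp_deviation_inequality_orthomartingale} is more delicate. After Fubini its $J$-tail equals
\[
\int_1^{+\infty} u\pr{\log\pr{1+u}}^{p_d} T_J(u)\, du,\qquad T_J(u):=\sum_{j\geq J}2^j\PP\ens{\abs{X_\gr{1}}>AL\pr{2^j}u}.
\]
Using the comparison $\sum_j 2^j\mathbf{1}\ens{L\pr{2^j}<s}\asymp L^{-1}(s)$, where $L^{-1}$ is the generalized inverse, one obtains $T_J(u)\lesssim \E{L^{-1}\pr{\abs{X_\gr{1}}/(Au)}\mathbf{1}\ens{L^{-1}\pr{\abs{X_\gr{1}}/(Au)}\geq 2^J}}$; a further change of variables $v=\abs{X_\gr{1}}/(Au)$ in the outer integral reduces the finiteness of the full ($J=0$) integral to $\E{L^{-1}\pr{\abs{X_\gr{1}}/A}}<\infty$, which is exactly condition \eqref{eq:cond_suff_WIP}. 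Dominated convergence then yields the vanishing of the $J$-tail, and combined with the exponential estimate this verifies \eqref{eq:tightness_random_fields_sufficient_cond} and gives tightness in $\Hca_\rho\pr{[0,1]^d}$.

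For the weak convergence, it remains to show the convergence of the finite-dimensional distributions of $W_\grn$ to those of $\norm{X_\gr{0}}_2 W$. Under the assumed ergodicity of the shift by $\gr{e}_d$, the CLT for strictly stationary orthomartingale difference random fields (as in \cite{MR3427925,MR3913270}) gives $S_\grn/\sqrt{\abs{\grn}}\to \Nca\pr{0,\norm{X_\gr{0}}_2^2}$; applied via the Cram\'er--Wold device to linear combinations of the increments of $W_\grn$ over the disjoint sub-rectangles associated with a finite family of evaluation points, and using the $\mathbb L^2$-orthogonality $\E{X_\gri X_\grj}=0$ for $\gri\neq\grj$ implied by the orthomartingale property, this yields the joint Gaussian limit with the covariance of the Brownian sheet. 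The main technical obstacle is the integral term in the tightness step, where the slowly varying factor $L\pr{2^j}$ and the weight $\pr{\log\pr{1+u}}^{p_d}$ must be reconciled with the moment-type condition \eqref{eq:cond_suff_WIP}; the double Fubini plus change-of-variables reduction to $\E{L^{-1}\pr{\abs{X_\gr{1}}/A}}<\infty$ is the crux of the argument.
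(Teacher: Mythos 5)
Your overall strategy is the same as the paper's: tightness is obtained by verifying the hypothesis of Proposition~\ref{prop:tightness_partial_sums} through Theorem~\ref{thm:deviation_inequality_orthomartingale} applied with $y$ proportional to $L\pr{2^j}$ (the paper takes $y=\frac{\eps}{2}\rho\pr{2^{-j}}2^{j/2}j^{-d/2}\asymp L\pr{2^j}$, so that $\pr{x/y}^{2/d}=2^{2/d}j$ and the exponential terms sum to the tail of a convergent geometric series), and the finite-dimensional part is delegated to Voln\'y's CLT for stationary orthomartingale difference fields \cite{MR3913270}, exactly as in the paper. The one place where you genuinely diverge is the treatment of the integral term, and that is also where your argument has a gap. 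The paper bounds $u\pr{\log\pr{1+u}}^{p_d}\leq\kappa u^2$, writes $\int_1^{\infty}\PP\ens{X>L\pr{2^j}u}u^2du\leq\E{\pr{X/L\pr{2^j}}^3\mathbf{1}\ens{X>L\pr{2^j}}}$, decomposes over the bands $L\pr{2^k}<X\leq L\pr{2^{k+1}}$ and controls $\sum_{j\leq k}2^j/L\pr{2^j}^3$ by Potter's bound (Lemma~\ref{lem:slowly_varying_2j_sur_L2j} applied to $L^3$). You instead apply Fubini in the other order and assert that the substitution $v=\abs{X_{\gr{1}}}/\pr{Au}$ reduces everything to $\E{L^{-1}\pr{\abs{X_{\gr{1}}}/A}}<\infty$. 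The conclusion is correct, but the change of variables alone does not deliver it: it leaves you with (up to constants)
\begin{equation*}
\frac{\abs{X_{\gr{1}}}^2}{A^2}\int_{L\pr{1}}^{\abs{X_{\gr{1}}}/A}\frac{L^{-1}\pr{v}}{v^3}\pr{\log\pr{1+\frac{\abs{X_{\gr{1}}}}{Av}}}^{p_d}dv,
\end{equation*}
and bounding this by a constant multiple of $L^{-1}\pr{\abs{X_{\gr{1}}}/A}$ requires that $L^{-1}$ grow faster than every power of $v$ — i.e., precisely the slow variation of $L$ via Potter's bound. Using only the monotonicity of $L^{-1}$ yields the useless bound $\abs{X_{\gr{1}}}^2\pr{\log\abs{X_{\gr{1}}}}^{p_d}L^{-1}\pr{\abs{X_{\gr{1}}}/A}$, so you must supply the analogue of Lemma~\ref{lem:slowly_varying_2j_sur_L2j}; once that is in place your route is equivalent in substance to the paper's Lemmas~\ref{lem:slowly_varying_2j_sur_L2j}--\ref{lem:slowly_varying}. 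Two smaller remarks: your choice $x=\eps\rho\pr{2^{-j}}2^{m_q/2}$ follows the rectangle $2^{\grm-m_q\gr{e}_q}$ as literally printed in \eqref{eq:tightness_random_fields_sufficient_cond}, whereas the proof of Proposition~\ref{prop:tightness_partial_sums} actually produces rectangles of the form $2^{\grm-j\gr{e}_q}$, which forces $x=\eps\rho\pr{2^{-j}}2^{j/2}$ as in the paper; your exponential estimate survives this correction. Also, both your comparison $\sum_j2^j\mathbf{1}\ens{L\pr{2^j}<s}\asymp L^{-1}\pr{s}$ and the paper's band decomposition implicitly use that $L$ is increasing, which the definition of $\Rca_{1/2,d}$ only imposes on $\rho$.
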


\section{Proofs}\label{sec:proofs}

\subsection{Proof of Theorem~\ref{thm:deviation_inequality_orthomartingale}}

The proof will be done by induction on the dimension $d$. 
For $d=1$, we need the following deviation inequality for 
 one dimensional martingales. This is a combination of 
Theorem~2.1 in \cite{MR3311214} and 
Theorem~6 in \cite{MR606989}.

\begin{Proposition}[Proposition~2.1 in 
 \cite{MR4466386}]
 \label{prop:inegalite_deviation_martingales_dominee}
 Let $\pr{D_i}_{i\geq 1}$ be a martingale difference sequence 
with respect to the filtration $\pr{\Fca_i}_{i\geq 0}$.  Suppose that $\E{ D_i 
^2}$ is finite for all 
$i\geq 1$. Suppose that there exists a nonnegative random variable $Y$ such 
that 
for all $1\leq i\leq n$, $\E{\varphi\pr{ D_i^2}}\leq\E{\varphi\pr{Y^2}}$ for all 
convex increasing function $\varphi\colon 
[0,\infty)\to[0,\infty)$. 
 Then for all $x,y>0$ and each $n\geq 1$, the following 
inequality holds:
 
\begin{equation}
\label{eq:inegalite_max_martingales_moments_ordre_p_domination_sto}
 \PP\ens{\max_{1\leq k\leq n}\abs{\sum_{i=1}^kD_i}>x n^{1/2}  }
 \leq 2\exp\pr{-\frac 12\pr{\frac{x}{y}}^{2}  }\\ 
 + 4\int_{1 }^{+\infty}w\PP\ens{ Y>yw/2}\mathrm{d}w.
 \end{equation}
\end{Proposition}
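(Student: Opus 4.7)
The plan is to follow the classical truncation scheme: split each difference $D_i$ into a bounded part and a large-jump part, each of which remains a martingale difference, and then control the two pieces by two different tools --- a sub-Gaussian martingale concentration inequality for the former and a Fuk--Nagaev type martingale tail bound for the latter. This matches the citation of the result as a combination of Theorem~6 of \cite{MR606989} (which supplies the exponential term) and Theorem~2.1 of \cite{MR3311214} (which supplies the integral term).

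Concretely, fix the truncation level $y$ and set $\tilde T_i := D_i\mathbf{1}\ens{\abs{D_i}\leq y} - \E{D_i\mathbf{1}\ens{\abs{D_i}\leq y}\mid\Fca_{i-1}}$ and $\tilde R_i := D_i - \tilde T_i$. Both $\pr{\tilde T_i}_{i\geq 1}$ and $\pr{\tilde R_i}_{i\geq 1}$ are $\pr{\Fca_i}$-martingale difference sequences, $\abs{\tilde T_i}\leq 2y$, and a union bound reduces the problem to separately estimating $\PP\ens{\max_{1\leq k\leq n}\abs{\sum_{i=1}^k\tilde T_i}>xn^{1/2}/2}$ and $\PP\ens{\max_{1\leq k\leq n}\abs{\sum_{i=1}^k\tilde R_i}>xn^{1/2}/2}$. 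Observe that truncating at level $y$ itself (rather than at $yn^{1/2}$) is what makes the $y$-scale in the sub-Gaussian term match the $xn^{1/2}$ normalization in the maximum.

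For the bounded part, apply a sub-Gaussian martingale exponential inequality --- Hoeffding--Azuma, or to recover the sharp constant $1/2$ in the exponent, the exponential supermartingale bound of \cite[Thm.~6]{MR606989}. Since $\abs{\tilde T_i}\leq 2y$, a careful tracking of the constants yields the claimed term $2\exp\pr{-\frac12\pr{x/y}^2}$. For the large-jump part, the convex-order domination $\E{\varphi\pr{D_i^2}}\leq \E{\varphi\pr{Y^2}}$ (applied to truncation test functions of the form $\varphi\pr{s}=\pr{s-t}_+$) turns the truncated second moments of the increments into truncated second moments of $Y$. Feeding this into the martingale Fuk--Nagaev-type bound of \cite[Thm.~2.1]{MR3311214} applied to $\pr{\tilde R_i}$, followed by a change of variable $u=yw/2$, produces the tail integral $4\int_1^{+\infty}w\PP\ens{Y>yw/2}\mathrm{d}w$.

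The main obstacle is not the qualitative scheme, which is standard, but the careful bookkeeping of constants: getting the sharp $1/2$ in the Gaussian exponent requires the sub-Gaussian exponential supermartingale rather than a crude Azuma bound, and matching the factor $4$ with threshold $y/2$ in the integral requires that the convex order be used in the precise form $\E{\pr{D_i^2-t}_+}\leq \E{\pr{Y^2-t}_+}$ together with a Fubini identity that rewrites the truncated second moment as a tail integral. Once these calibrations are made, summing the two contributions gives inequality~\eqref{eq:inegalite_max_martingales_moments_ordre_p_domination_sto}.
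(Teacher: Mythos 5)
You should first note that the paper does not actually prove this statement: it is imported verbatim as Proposition~2.1 of \cite{MR4466386}, with the one--line indication that it follows by combining Theorem~2.1 of \cite{MR3311214} with Theorem~6 of \cite{MR606989}. Judged on its own, your sketch has the roles of these two references backwards: \cite{MR606989} is a convex--ordering/rearrangement theorem (it is what lets one exploit the hypothesis $\E{\varphi\pr{D_i^2}}\leq\E{\varphi\pr{Y^2}}$ and ultimately produces the integral term), while \cite{MR3311214} is the exponential martingale inequality that produces the Gaussian--type term. This is not a cosmetic slip, because your plan relies on ``the exponential supermartingale bound of \cite{MR606989}'' to recover the sharp constant $\frac 12$, and that theorem supplies no such bound.

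The substantive gap is in the calibration you defer to ``careful bookkeeping''. After truncation at level $y$ your bounded increments satisfy $\abs{\tilde T_i}\leq 2y$, and at threshold $xn^{1/2}/2$ an Azuma--Hoeffding bound gives $2\exp\pr{-\frac 1{32}\pr{x/y}^2}$, not $2\exp\pr{-\frac 12\pr{x/y}^2}$; no tracking of constants repairs this, since with increments of size $2y$ the sub-Gaussian variance proxy per step is of order $y^2$, and any variance-based (Freedman/Bernstein-type) sharpening requires control of the \emph{conditional} variances $\E{D_i^2\mid\Fca_{i-1}}$, which the hypothesis --- an unconditional convex-order domination by $Y$ --- does not provide. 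This is precisely why the quoted proof uses the Fan--Grama--Liu inequality, which involves the event $\ens{\langle S\rangle_n\leq v^2}$, together with Rüschendorf's ordering theorem to convert the unconditional domination into a bound on the probability that the quadratic characteristic (equivalently $\sum_i D_i^2$) is large, that probability being what feeds the integral term. Moreover, you give no argument for why the large-jump part produces the $n$-free and $x$-free quantity $4\int_1^{+\infty}w\PP\ens{Y>yw/2}\mathrm{d}w$: a Doob--Chebyshev estimate on $\sum_i\tilde R_i$ leaves a factor $x^{-2}$ (and an $n$ from summing truncated moments), and removing it needs an additional device such as a preliminary reduction to $x/y$ bounded below, in the spirit of the reduction $x/y>3^d$ used in the paper's proof of Theorem~\ref{thm:deviation_inequality_orthomartingale}. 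As written, the proposal reproduces the standard qualitative truncation scheme but does not yield inequality \eqref{eq:inegalite_max_martingales_moments_ordre_p_domination_sto} with the stated constants.
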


Assume that Theorem~\ref{thm:deviation_inequality_orthomartingale} holds in 
dimension $d-1$ with $d\geq 2$. We have to prove 
\eqref{eq:exp_deviation_inequality_orthomartingale} 
for all $d$-dimensional orthomartingale difference 
fields satisfying \eqref{eq:invariance_loi_sommes}.
We first get rid of the maximum over the coordinate $d$, 
apply the inequality of the $d-1$ dimension to the obtained 
orthomartingale and we are then reduced to control the 
tails of a one dimensional martingale. More concretely, 
the induction step is done as follows.
\begin{enumerate}
 \item Step 1: let $M:=\max_{\gr{1}\imd\gri\imd\gr{n}  } \abs{S_{\gri}  }$ and 
 \begin{equation}
  M':=\max_{1\leq i_q\leq n_q,1\leq q\leq d-1}
  \abs{S_{i_1,\dots,i_{d-1},n_d}}.
 \end{equation}
Then we will show that 
\begin{equation}
 \PP\ens{M>x}\leq \int_1^\infty\PP\ens{M'>xu/2}du.
\end{equation}
\item Step 2: the tails of $M'$ are controlled by 
applying the result for $\pr{d-1}$-dimensional random fields. 
\item Step 3: it remains to control the tails of 
$\sum_{i_d=1}^{n_d}X_{1,\dots,1,i_d}$, which can be done 
by using the one dimensional result.
\end{enumerate}

The proof will be quite similar to that of 
Theorem~1.1 in \cite{MR4466386}. The latter 
gave an exponential inequality in the spirit of those of 
the paper for $U$-statistics, that is, sum of terms of the 
form $h\pr{X_1,\dots,X_r}$ where $\pr{X_i}_{i\geq 1}$ is 
i.i.d.\ The connection with orthomartingales is the following. 
First, using decoupling (see \cite{MR1334173}), the 
following inequality takes place:
\begin{equation}
 \PP\ens{\abs{\sum_{1\leq i_1<\dots i_r\leq n}
 h\pr{X_{i_1},\dots,X_{i_r}}
 }>x}\leq C\PP\ens{\abs{\sum_{1\leq i_1<\dots i_r\leq n}
 h\pr{X_{i_1}^{\pr{1}},\dots,X_{i_r}^{\pr{r}}}
 }>Cx},
\end{equation}
where $C$ is a universal constant and the 
sequences $\pr{X_i^{\pr{k}}}_{i\in \Z}, 1\leq k\leq r$ 
are mutually independent copies of 
$\pr{X_i}_{i\in\Z}$. If we assume that 
$\E{h\pr{X_1,\dots,X_r}\mid \sigma\pr{X_i,
i\in\ens{1,\dots,r}\setminus \ens{k}}}=0$ for
all $k\in \ens{1,\dots,r}$, then the random field 
$\pr{ h\pr{X_{i_1}^{\pr{1}},\dots,X_{i_r}^{\pr{r}}}}_{
i_1,\dots,i_r\in\Z}$ is an orthomartingale difference 
random field for the filtration 
$\pr{\Fca_{i_1,\dots,i_r}}_{i_1,\dots,i_r\in\Z}$ given by 
$\Fca_{i_1,\dots,i_r}=\sigma\pr{\eps_{j_q}^{\pr{q}},
j_q\leq i_q}$.

Let us now go into the details of the proof. Let $x$, $y>0$.
We can assume without loss of generality  that $x/y>3^d$. Indeed, suppose 
that we showed the existence of constants $A'_d$, $B_d$ and $C_d$ such that the 
inequality
\begin{multline} 
 \PP\ens{\max_{\gr{1}\imd\gri\imd\gr{n}  } \abs{S_{\gri}  }
 >x\abs{\grn}^{1/2}}\leq A'_d\exp\pr{-\pr{\frac{x}{y}}^{2/d}}
 \\+B_d\int_1^{+\infty}\PP\ens{\abs{X_{\gr{1}}}>yuC_d}
 u\pr{\log\pr{1+u}}^{p_d}du,
\end{multline}
holds for each orthomartingale difference random field 
$\pr{X_{\gri}}_{\gri\in\Z^d}$ satisfying \eqref{eq:invariance_loi_sommes} and 
for each $x,y>0$ such that $x/y>3^d$, then replacing $A'_d$ by 
$A_d=\max\ens{\exp\pr{9},A'_d}$ ensures that $A_d\exp\pr{-\pr{x/y}^{2/d}}
\geq  1$ when $x/y \leq 3^d$, so that the desired inequality becomes trivial in 
this range of parameters.
\begin{enumerate}
 \item Step 1. Let $M$ and $M_N$ be defined as 
  $M:=\max_{\gr{1}\imd\gri\imd\gr{n}  } \abs{S_{\gri}  }$, 
  $\grn=\pr{n_1,\dots,n_d}$ and 
 \begin{equation}
  M_N:=\max_{1\leq i_q\leq n_q,1\leq q\leq d-1}
  \abs{S_{i_1,\dots,i_{d-1},N}}.
 \end{equation}
 Define the filtration $\Gca_N$ as $\Fca_{n_1,\dots,n_{d-1},
 N}$. We check that $\pr{M_N}_{N\geq 1}$ is a submartingale 
 with respect the filtration $\pr{\Gca_N}_{N\geq 1}$. Indeed, 
 \begin{equation}
  \E{M_N\mid \Gca_{N-1}}
  \geq \max_{1\leq i_q\leq n_q,1\leq q\leq d-1}
  \abs{\E{S_{i_1,\dots,i_{d-1},N}}\mid\Gca_{N-1}    }
 \end{equation}
and by the orthomartingale property, 
\begin{equation}
 \E{S_{i_1,\dots,i_{d-1},N}\mid\Gca_{N-1}}
 =S_{i_1,\dots,i_{d-1},N-1}.
\end{equation}
By Doob's inequality, we derive that 
\begin{equation}
x\PP\ens{M>x}\leq \E{M_{n_d}\mathbf{1}\ens{M>x}}. 
\end{equation}
Expressing the latter expectation as an integral of 
the tail and cutting this integral at $x/2$ gives the bound 
\begin{equation}\label{eq:first_step_induction}
 \PP\ens{\max_{\gr{1}\imd\gri\imd\gr{n}  } \abs{S_{\gri}  }
 >x\abs{\grn}^{1/2}} \leq
 \int_1^{+\infty}
 \PP\ens{
 \max_{1\leq i_q\leq n_q,1\leq q\leq d-1}
  \abs{S_{i_1,\dots,i_{d-1},n_d}}>x\abs{\grn}^{1/2}u/2
 }du.
\end{equation}
\item Step 2. Define for $i_1,\dots,i_{d-1}$ the random 
variable 
\begin{equation}
 X'_{i_1,\dots,i_{d-1}}:=
 \sum_{i_d=1}^{n_d}X_{i_1,\dots,i_{d-1},i_d}.
\end{equation}
Then $\pr{X'_{i_1,\dots,i_{d-1}}}_{i_1,\dots,i_{d-1}\in\Z}$ 
is an orthomartingale difference random field with respect to 
the commuting filtration $\pr{\Fca_{i_1,\dots,i_{d-1},n_d}}_{i_1,\dots,i_{d-1}\in\Z}$
satisfying \eqref{eq:invariance_loi_sommes}. We thus apply 
the induction hypothesis with 
\begin{equation}
\widetilde{x}:= xn_d^{1/2}u/2  ,       \quad \widetilde{y}:=  n_d^{1/2}x^{ 1/d}y^{\frac{d-1}d}u
\pr{1+2\ln\pr{ u}}^{-\frac{d-1}2}/2.
\end{equation}
We get in view of \eqref{eq:first_step_induction} and 
\begin{equation}
 \pr{\frac{\widetilde{x}}{\widetilde{y}}}^{\frac{2}{d-1}}= 
 \pr{\frac xy}^{2/d}\pr{1+2\ln u}>3
\end{equation}

that 
\begin{multline}\label{eq:second_step_induction}
 \PP\ens{\max_{\gr{1}\imd\gri\imd\gr{n}  } \abs{S_{\gri}  }
 >x\abs{\grn}^{1/2}}  \leq
 A_{d-1}\int_1^{+\infty}\exp\pr{-\pr{\frac xy}^{2/d}\pr{1+2\ln u}}du\\+ 
 B_{d-1}\int_1^{+\infty}\int_1^{+\infty}\PP\ens{
  \abs{X'_{1,\dots,1}}>\frac{C_{d-1}}2vn_d^{1/2}x^{1/d}y^{\frac{d-1}d}u
\pr{1+2\ln\pr{ u}}^{-\frac{d-1}2}
 }v\pr{\log\pr{1+v}}^{p_{d-1}}dudv.
\end{multline}
For the first term, using that $ \pr{\frac xy}^{2/d}>3$, 
we get 
\begin{equation}
 \int_1^{+\infty}\exp\pr{-\pr{\frac xy}^{2/d}\pr{1+2\ln u}}du
 \leq \exp\pr{-\pr{\frac xy}^{2/d}}
  \int_1^{+\infty}\exp\pr{-6\ln u}du
\end{equation}
hence 
\begin{multline}\label{eq:second_step_induction_final}
 \PP\ens{\max_{\gr{1}\imd\gri\imd\gr{n}  } \abs{S_{\gri}  }
 >x\abs{\grn}^{1/2}}  \leq
 \frac{A_{d-1}}5\exp\pr{-\pr{\frac xy}^{2/d}}\\+ 
 B_{d-1}\int_1^{+\infty}\int_1^{+\infty}\PP\ens{
  \abs{\sum_{i_d=1}^{n_d}X_{1,\dots,1,i_d}}>\frac{C_{d-1}}2vn_d^{1/2}x^{1/d}y^{\frac{d-1}d}u
\pr{1+2\ln\pr{ u}}^{-\frac{d-1}2}
 }v\pr{\log\pr{1+v}}^{p_{d-1}}dudv.
\end{multline}
\item Step 3. It remains to find a bound for the double 
integral. For fixed $u$ and $v>1$, we apply 
Proposition~\ref{prop:inegalite_deviation_martingales_dominee} 
in the following setting: $x$ is replaced by 
$\bar{x}$ defined by 
\begin{equation}
 \bar{x}:=\frac{C_{d-1}}2v x^{1/d}y^{\frac{d-1}d}u
\pr{1+2\ln\pr{ u}}^{-\frac{d-1}2}
\end{equation}
and $y$ by $\bar{y}$ defined by 
\begin{equation}
 \bar{y}:= \frac{C_{d-1}}2yuv\pr{1+2\ln u}^{-\frac{d-1}2}
 2^{-1/2}\pr{1+2\ln\pr{uv}}^{-1/2}.
\end{equation}
In this way, 
\begin{equation}
 \frac 12\pr{\frac{\bar{x}}{\bar{y}}}^2
 =\pr{\frac{x}y}^{2/d}\pr{1+2\ln\pr{uv}}
\end{equation}
and \eqref{eq:inegalite_max_martingales_moments_ordre_p_domination_sto} gives 
\begin{multline}\label{eq:estimated_last_step}
 \PP\ens{\max_{\gr{1}\imd\gri\imd\gr{n}  } \abs{S_{\gri}  }
 >x\abs{\grn}^{1/2}}  \leq
 \frac{A_{d-1}}5\exp\pr{-\pr{\frac xy}^{2/d}}\\+ 
 2B_{d-1}\exp\pr{-\pr{\frac{x}{y}}^{2/d}}\int_1^{+\infty}\int_1^{+\infty}
 \exp\pr{-2\ln\pr{uv}}dudv\\
 + 4B_{d-1}\int_1^{+\infty}\int_1^{+\infty}\int_1^{+\infty}
 h\pr{u,v,w}dudvdw,
\end{multline}
where 
\begin{equation}
 h\pr{u,v,w}:= 
 \PP\ens{\abs{X_{\gr{1}}}> 
 \frac{C_{d-1}}4yuvw\pr{1+2\ln u}^{-\frac{d-1}2}
 2^{-1/2}\pr{1+2\ln\pr{uv}}^{-1/2}
 }  vw\pr{\log\pr{1+v}}^{p_{d-1}}.
\end{equation}
Observing that for $u,v>1$, $\pr{1+2\ln\pr{uv}}^{-1/2}\geq 
\pr{1+2\ln\pr{u}}^{-1/2}\pr{1+2\ln\pr{v}}^{-1/2}$ and defining 
the functions
\begin{equation}
f_q\pr{t}=t\pr{1+2\ln t}^{-q}, t\geq 1, q>0
\end{equation}
\begin{equation}
g\colon t\mapsto  \PP\ens{\abs{X_{\gr{1}}}> 
 \frac{C_{d-1}}{4\cdot 2^{1/2}}yt
 }  ,
\end{equation}
we derive from estimate \eqref{eq:estimated_last_step} that 
\begin{multline}\label{eq:estimated_last_step_bis}
 \PP\ens{\max_{\gr{1}\imd\gri\imd\gr{n}  } \abs{S_{\gri}  }
 >x\abs{\grn}^{1/2}}  \leq
 \frac{A_{d-1}}5\exp\pr{-\pr{\frac xy}^{2/d}}+ 
 2B_{d-1}\exp\pr{-\pr{\frac{x}{y}}^{2/d}} \\
 + 4B_{d-1}\int_1^{+\infty}\int_1^{+\infty}\int_1^{+\infty}
g\pr{w f_{d/2}\pr{u}f_{1/2}\pr{v}}  vw\pr{\log\pr{1+v}}^{p_{d-1}}dwdudv.
\end{multline}
Doing for fixed $u,v\geq 1$ the substitution $t=w f_{d/2}\pr{u}f_{1/2}\pr{v}$, 
we are reduced to show that there exists a constant $K$ such that for each 
$t\geq 1$,
\begin{equation}
I\pr{t}\leq K\pr{ \log\pr{1+t} }^{p_d},
\end{equation}
where 
\begin{equation}
I\pr{t}:= \int_1^{+\infty}\int_1^{+\infty}\frac{v\pr{\log\pr{1+v}}^{p_{d-1}} }{\pr{f_{d/2}\pr{u}f_{1/2}\pr{v}}^2}
\mathbf{1}_{t>f_{d/2}\pr{u}f_{1/2}\pr{v}}du dv.
\end{equation}

Using the fact that there exists a constant $c$ such that for each $u\geq 1$,  $f_{d/2}\pr{u}\geq c$, we derive that 
\begin{equation}
I\pr{t}\leq  \int_1^{+\infty}\int_1^{+\infty}\frac{v\pr{\log\pr{1+v}}^{p_{d-1}} }{\pr{f_{d/2}\pr{u}f_{1/2}\pr{v}}^2}
\mathbf{1}_{t>cf_{1/2}\pr{v} }du dv
\end{equation}
and since the integral over $u$ is convergent, we are reduced to show that 
 there exists a constant $K$ such that for each $t\geq 1$,
\begin{equation} 
\int_1^{+\infty}\frac1v\pr{\log\pr{1+2v}}^{p_{d-1}+1}\mathbf{1}_{t>cf_{1/2}
\pr{v
} }dv  \leq K\pr{ \log\pr{1+t} }^{p_d}.
\end{equation}
Since there exists a constant $\kappa$ such that for each $v\geq 1$, $f_{1/2}\pr{v}\geq \kappa\sqrt v$, one can see that 
\begin{equation} 
 \int_1^{+\infty}\frac1v\pr{\log\pr{1+2v}}^{p_{d-1}+1}\mathbf{1}_{t>cf_{1/2}\pr{v} }dv  \leq \mathrm{1}_{t>c\kappa}\int_1^{\pr{t/(c\kappa)}^2}
 \frac1v\pr{\log\pr{1+2v}}^{p_{d-1}+1}dv
\end{equation}
and the fact that $p_d=p_{d-1}+2$ ends the proof of 
Theorem~\ref{thm:deviation_inequality_orthomartingale}.
\end{enumerate}

\subsection{Proof of Theorem~\ref{thm:large_deviation}}

We apply Theorem~\ref{thm:deviation_inequality_orthomartingale} 
to $y=\abs{\gr{N}}^{1/\pr{2+d\gamma}}x^{2/\pr{2+d\gamma}}$. 
Bounding the resulting integral term by a constant times 
$\exp\pr{-y^\gamma}$, one sees that in this case, the 
two terms of the right hand side of \eqref{eq:exp_deviation_inequality_orthomartingale} have 
a similar contribution.

\subsection{Proof of Proposition~\ref{prop:tightness_partial_sums}}

The proof of the tightness criterion rests on the Schauder decomposition of the 
spaces 
$\mathcal H_\rho^o\left([0,1]^d\right)$. In order to state it, we need to introduce the following notations.

Set for $j\geqslant 0$, 
\begin{equation}
 W_j:=\ens{k2^{-j},0\leq k\leq 2^j}^d
\end{equation}
and 
\begin{equation}
 V_0:=W_0, \quad V_j:=W_j\setminus W_{j-1},j\geqslant 1.
\end{equation}

We define for $\gr{v}\in V_j$ the pyramidal function $\Lambda_{j,
\gr{v}}$ by 
\begin{equation}
 \Lambda_{j,\gr{v}}(\gr{t}):=\Lambda(2^j(\gr{t}-\gr{v})), \quad 
 \gr{t}\in [0,1]^d,
\end{equation}
where 
\begin{equation}
 \Lambda(\gr{t}):=\max\ens{0,1-\max_{t_i<0}\abs{t_i}-\max_{t_i>0}\abs{t_i}}, 
 \quad \gr{t}=(t_i)_{i=1}^d\in [-1,1]^d.
\end{equation}
For $x\in \mathcal H_\alpha^o([0,1]^d)$, we define the coefficients 
$\lambda_{j,\gr{v}}(x)$ by 
$\lambda_{0,\gr{v}}(x)=x(\gr{v})$, $\gr{v}\in V_0$ and for 
$j\geqslant 1$ and $v\in V_j$, 
\begin{equation}
 \lambda_{j,\gr{v}}(x):=x\left(\gr{v}\right)-\frac 12\left(x\left(\gr{v^-}\right)+
 x\left(\gr{v^+}\right)\right),
\end{equation}
where $\gr{v^+}$ and $\gr{v^-}$ are define in the following way. 
Each $\gr{v}\in V_j$ is represented in a unique way by $\gr{v}=
\left(k_i2^{-j}\right)_{i=1}^d$.
Then $\gr{v^+}:=(v^+_i)_{i=1}^d$ and $\gr{v^-}:=\left(v^-_i\right)_{i=1}^d$ are defined by 
\begin{equation}
 \gr{v_i^-}:=\begin{cases}
         v_i-2^{-j},&\mbox{ if }k_i\mbox{ is odd;}\\
         v_i,&\mbox{ if }k_i\mbox{ is even}
        \end{cases}\quad 
 \gr{v_i^+}:=\begin{cases}
         v_i+2^{-j},&\mbox{ if }k_i\mbox{ is odd;}\\
         v_i,&\mbox{ if }k_i\mbox{ is even.}
        \end{cases}
\end{equation}
The sequential norm is defined by 
\begin{equation}
 \norm{x}_\rho^{\mathrm{seq}}:=
 \sup_{j\geqslant 0}\rho\pr{2^{-j}}^{-1}\max_{\gr{v}\in V_j}
 \abs{\lambda_{j,\gr{v}}(x)}, \quad x\in \mathcal H_\rho^o\left([0,1]^d\right).
\end{equation}
By \cite{MR2141332}, the norm $\norm{\cdot}_\rho^{\mathrm{seq}}$ is equivalent 
to $\norm{\cdot}_\rho$ on $\mathcal H_\rho^o\left([0,1]^d\right)$.

 A general tightness criterion is available for moduli of 
 the form $\rho\colon h\mapsto h^\alpha$. The criterion rests on a Schauder 
 decomposition of $\mathcal H_\rho^o$ as $\bigoplus_{j\geq 1}E_j$, 
 where $E_j$ is the vector space generated by the functions 
 $\Lambda_{j,\gr{v}}$, $\gr{v}\in V_j$.

\begin{Theorem}[Theorem~6, \cite{MR2340883}]\label{thm:general_theorem_tightness_Hölder_dim_d}
 Let $\ens{\zeta_{\grn},\grn\in \N^d}$ and $\zeta$ be random elements with values in the 
 space $\mathcal H_\alpha\left([0,1]^d\right)$. Assume that the following conditions are 
 satisfied. 
 \begin{enumerate}
  \item For each dyadic $\gr{t}\in [0,1]^d$, the net $\ens{\zeta_{\grn}(\gr{t}),
  \grn\in \N^d}$ 
  is asymptotically tight on $\R$. 
  \item For each positive $\varepsilon$, 
  \begin{equation}
   \lim_{J\to \infty}\limsup_{\min\grn\to \infty}
   \PP\ens{\sup_{j\geqslant J}2^{\alpha j}\max_{\gr{v}\in V_j}\abs{\lambda_{j,
   \gr{v}}(\zeta_{\grn})
   }>\varepsilon}=0.
  \end{equation}
 \end{enumerate}
Then the net $\ens{\zeta_{\grn},\grn\in \N^d}$ is asymptotically tight in the space 
$\mathcal H_\alpha([0,1]^d)$.
\end{Theorem}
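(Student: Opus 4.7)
The plan is to use the Schauder/pyramidal characterization of $\mathcal{H}_\rho([0,1]^d)$ together with the norm equivalence from \cite{MR2141332} to reduce tightness to a decay statement for the Schauder coefficients of $W_\grn$, and then to bound those coefficients by rectangular partial sums of the random field so that the hypothesis \eqref{eq:tightness_random_fields_sufficient_cond} applies directly.

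First I would invoke the equivalence $\norm{\cdot}_\rho\sim\norm{\cdot}_\rho^{\mathrm{seq}}$ combined with an Arzelà--Ascoli-type characterization of relative compactness in $\mathcal{H}_\rho^o([0,1]^d)$ to reduce asymptotic tightness of $(W_\grn)_{\grn\smd\gr{1}}$ to two conditions: (i) asymptotic tightness on $\R$ of $W_\grn(\grt)$ at each dyadic $\grt\in[0,1]^d$, and (ii) for every $\varepsilon>0$,
\begin{equation*}
\lim_{J\to\infty}\limsup_{\min\grn\to\infty}\PP\ens{\sup_{j\geq J}\rho(2^{-j})^{-1}\max_{\gr{v}\in V_j}|\lambda_{j,\gr{v}}(W_\grn)|>\varepsilon}=0.
\end{equation*}
A preliminary reduction to the dyadic case $n_q=2^{m_q}$, with general $n_q$ handled by sandwiching between consecutive powers of $2$ and exploiting strict stationarity, lets me identify the parameter $\grm$ of the hypothesis with $(\log_2 n_q)_q$. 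Condition (i) then follows by a direct (and simpler) application of \eqref{eq:tightness_random_fields_sufficient_cond}: at a dyadic $\grt$, $W_\grn(\grt)=|\grn|^{-1/2}S_{(n_q t_q)}$ is a single normalized rectangular partial sum, which by stationary translation and a dyadic decomposition into thin slabs is controlled by the $j\leq m_q$ instances of the hypothesis summed over $q$.

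The core of the argument is condition (ii). Using the product form $\mu_\gri(\grt)=\prod_q\mu_{i_q,q}(t_q)$ of the interpolation weights, I would expand $\lambda_{j,\gr{v}}(W_\grn)=W_\grn(\gr{v})-\tfrac{1}{2}(W_\grn(\gr{v}^-)+W_\grn(\gr{v}^+))$ and observe that only the directions $q\in Q(\gr{v}):=\{q:k_q\text{ odd}\}$ produce nontrivial 1D second differences of $\mu_{i,q}$, each supported on two adjacent length-$2^{m_q-j}$ intervals. This expresses $\lambda_{j,\gr{v}}(W_\grn)$ as an alternating sum of $2^{|Q|}$ rectangular partial sums $S$ with rectangles of length $2^{m_q-j}$ in each $q\in Q$ and prefix length $k_q2^{m_q-j}$ in each $q\notin Q$. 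Fixing a nonempty $Q\subseteq[d]$ and a distinguished direction $q_0\in Q$, I would control the maximum over $\gr{v}\in V_j$ with $Q(\gr{v})=Q$ by combining (a) a union bound on the $\leq2^{j-1}$ odd positions $k_{q_0}$, reducing each translate to the origin via strict stationarity, with (b) cumulative maxima in the remaining $d-1$ coordinates, dominated by a single $\max_{\gr{1}\imd\grk\imd\gr{K}}|S_\grk|$ on a large rectangle thin in direction $q_0$. A further dyadic scale change in direction $q_0$ converts the $2^{m_{q_0}-j}$-thickness of the naturally arising slab into the $1$-thick slab shape of \eqref{eq:tightness_random_fields_sufficient_cond}.

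Summing over $j$ from $J$ to $\max_q m_q$ (levels $j>\max_q m_q$ contribute negligibly thanks to the multilinear structure of $W_\grn$ and the fast $2^{-j|Q|/2}$ decay of the Schauder coefficients) and over the finitely many pairs $(Q,q_0)$ reduces condition (ii) exactly to \eqref{eq:tightness_random_fields_sufficient_cond}. The main obstacle is combinatorial: ensuring that only a $2^j$ (and not $2^{jd}$) factor appears in front of the rectangular-partial-sum probability, which forces the union bound to be applied only in direction $q_0$ while the other $d-1$ directions are handled by cumulative maxima and stationary shifts; one must also carefully match the $2^{m_{q_0}-j}$-thick slab arising naturally from the Schauder expansion to the $1$-thick slab of the hypothesis, while preserving the normalization $\rho(2^{-j})\prod_u 2^{m_u/2}$.
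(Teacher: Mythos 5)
Your proposal does not actually prove the stated theorem; it proves a different statement. The theorem in question is the abstract tightness criterion of Ra\v{c}kauskas--Suquet--Zemlys (quoted in the paper as Theorem~6 of \cite{MR2340883}, and not reproved there): for \emph{arbitrary} random elements $\zeta_{\grn}$ of $\mathcal H_\alpha\pr{[0,1]^d}$, finite-dimensional asymptotic tightness at dyadic points together with the coefficient condition $\lim_{J\to\infty}\limsup_{\min\grn\to\infty}\PP\ens{\sup_{j\geq J}2^{\alpha j}\max_{\gr{v}\in V_j}\abs{\lambda_{j,\gr{v}}\pr{\zeta_{\grn}}}>\varepsilon}=0$ implies asymptotic tightness in $\mathcal H_\alpha\pr{[0,1]^d}$. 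There is no random field, no partial-sum process and no stationarity in this statement. Your very first step --- ``invoke the norm equivalence and an Arzel\`a--Ascoli-type characterization to reduce asymptotic tightness of $\pr{W_{\grn}}$ to conditions (i) and (ii)'' --- is precisely the content of the theorem you were asked to prove, taken for granted without argument. Everything that follows (expanding $\lambda_{j,\gr{v}}\pr{W_{\grn}}$ into rectangular partial sums, the union bound in a single distinguished direction $q_0$, the stationarity shifts, the matching of slab thicknesses to \eqref{eq:tightness_random_fields_sufficient_cond}) is a verification of condition (ii) for the specific process $W_{\grn}$; that is the proof of Proposition~\ref{prop:tightness_partial_sums}, not of Theorem~\ref{thm:general_theorem_tightness_Hölder_dim_d}, and indeed it closely parallels the paper's own argument for that proposition (via Lemma~\ref{lem:Rac_Suq_Zem} and stationarity).

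What is missing is the functional-analytic core: one must use that the pyramidal functions $\Lambda_{j,\gr{v}}$ provide a Schauder decomposition $\mathcal H_\alpha^o=\bigoplus_{j\geq 0}E_j$ with the sequential norm $\norm{\cdot}_\alpha^{\mathrm{seq}}$ equivalent to $\norm{\cdot}_\alpha$, and then apply a tightness criterion in a separable Banach space equipped with such a decomposition: a net is asymptotically tight if (a) its images under the finite-rank projections onto $\bigoplus_{j\leq J}E_j$ are asymptotically tight, and (b) the norms of the remainders $\sup_{j>J}\rho\pr{2^{-j}}^{-1}\max_{\gr{v}\in V_j}\abs{\lambda_{j,\gr{v}}\pr{\zeta_{\grn}}}$ are small in probability, uniformly in $\grn$, as $J\to\infty$. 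Condition (1) of the statement yields (a), because each coefficient $\lambda_{j,\gr{v}}\pr{\zeta_{\grn}}$ with $j\leq J$ is a fixed finite linear combination of values $\zeta_{\grn}\pr{\grt}$ at dyadic points and $E_j$ is finite dimensional; condition (2) yields (b). None of this appears in your proposal, so as a proof of the quoted theorem it has a genuine gap: the step you call a ``reduction'' is the theorem itself.
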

This extends readily to $\rho\in\mathcal R_{1/2,d}$.

First observe that by bounding from below the sum over $j$ 
by the term at index $j=J$ and taking $i=d$, 
\eqref{eq:tightness_random_fields_sufficient_cond} 
implies that 
\begin{equation} 
 \lim_{J\to \infty}\limsup_{\min\grm\to \infty}
  2^J \PP\ens{\max_{\gr{1}\imd\gr{k}\imd 2^{\grm -J\gr{e}_d } }
  \abs{\sum_{\gr{1}\imd\gri\imd\grk} X_{\gri}   }> \varepsilon\rho\pr{2^{-j}}
  \prod_{u=1}^d2^{m_u/2}}=0,
\end{equation}
and doing the replacement of index $m'_d=m_d-J$ for a 
fixed $J$ gives asymptotic tightness of 
$ \pr{W_{\grn}\pr{\gr{t}}}_{\grn\smd\gr{1}}$ for each 
$\gr{t}$.

It remains to check the second condition of Theorem~\ref{thm:general_theorem_tightness_Hölder_dim_d}.
Since the Schauder decomposition $\mathcal H_\rho^o=
\bigoplus_{j\geq 1}E_j$ is also valid for $\rho\in\Rca_{1/2,d}$, 
this theorem also holds with the map $h\mapsto h^\alpha$ replaced 
by $\rho$. Therefore, 
it suffices to prove that 
 \begin{equation}\label{eq:step_for_tightness}
   \lim_{J\to \infty}\limsup_{\min\grn\to \infty}
   \PP\ens{\sup_{j\geqslant J} \rho\pr{2^{-j}}^{-1}\max_{\gr{v}
   \in V_j}\abs{\lambda_{j,
   \gr{v}}\pr{W_{\grn}}
   }>\varepsilon }=0.
  \end{equation}

Consider for $\gr{s}:=(s_2,\dots,s_d)\in [0,1]^{d-1}$ and $t,t'\in [0,1]$ the 
quantity 
\begin{equation}
 \Delta_{\grn}(t,t',\gr{s}):=\sqrt{\abs{\grn}}
 \abs{W_{\grn}\pr{t',\gr{s}}-W_{\grn}\pr{t,\gr{s}}}.
\end{equation}

We recall the following lemma:
\begin{Lemma}[Lemma~11, \cite{MR2340883}]\label{lem:Rac_Suq_Zem}
 For any $t',t\in [0,1]$, $t'>t$, the following inequality holds:  
 \begin{multline}
  \sup_{\gr{s}\in [0,1]^{d-1}}\Delta_{\grn}\pr{t,t',\gr{s}}
  \leq  3^d\mathbf 1\ens{t'-t\geqslant \frac 1{n_1}}\quad 
  \max_{\mathclap{\substack{ 1\leq k_\ell\leq n_\ell \\
  2\leq \ell\leq d} }}\quad\abs{\sum_{i_1=[n_1t]+1}^{[n_1t']}\quad
  \sum_{\mathclap{\substack{ 1\leq i_\ell\leq k_\ell \\
  2\leq \ell\leq d} }}\quad X_{\gri}}+\\
  +3^d\min\ens{1,n_1(t'-t)}\max_{1\leq i_1\leq n_1}
  \quad 
  \max_{\mathclap{\substack{ 1\leq k_\ell\leq n_\ell \\
  2\leq \ell\leq d} }}\quad\abs{\quad
  \sum_{\mathclap{\substack{ 1\leq i_\ell\leq k_\ell \\
  2\leq \ell\leq d} }}\quad X_{\gri}}.
 \end{multline}
\end{Lemma}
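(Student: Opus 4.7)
The plan is to unfold the definition of $W_{\grn}$ explicitly as a weighted sum of the $X_{\gri}$'s, isolate the contribution of the first coordinate (where $t,t'$ vary), separate cells entirely inside $[n_1t,n_1t']$ from the at most two boundary cells, and then handle the sup over $\gr{s}\in[0,1]^{d-1}$ by iterated Abel summation in the last $d-1$ coordinates.

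\textbf{Step 1 (Unfold the increment).} Set
\begin{equation*}
 a_{i_1}(t,t'):=\lambda\bigl((i_1-1,i_1)\cap[n_1t,n_1t']\bigr)\in[0,1],\qquad
 b_{i_q}(s_q):=\lambda\bigl((i_q-1,i_q)\cap[0,n_qs_q]\bigr)\in[0,1].
\end{equation*}
From the definition of $W_{\grn}$ one then has
\begin{equation*}
 \sqrt{\abs{\grn}}\bigl(W_{\grn}(t',\gr{s})-W_{\grn}(t,\gr{s})\bigr)
 =\sum_{i_1\geq 1}a_{i_1}(t,t')\,Y_{i_1}(\gr{s}),\qquad
 Y_{i_1}(\gr{s}):=\sum_{i_2,\dots,i_d\geq 1}\prod_{q=2}^d b_{i_q}(s_q)\,X_{\gri}.
\end{equation*}

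\textbf{Step 2 (Bulk/boundary split on $i_1$).} The coefficient $a_{i_1}(t,t')$ equals $1$ exactly when $(i_1-1,i_1)\subset[n_1t,n_1t']$, that is, for $[n_1t]+1\leq i_1\leq[n_1t']$; this range is non-empty iff $t'-t\geq 1/n_1$. Outside this range there are at most two indices $i_1$ for which $a_{i_1}(t,t')\neq 0$, and for them $a_{i_1}(t,t')\leq\min\{1,n_1(t'-t)\}$ since the relevant intersection has length at most $1$ and at most $n_1(t'-t)$. This decomposes the sum into a bulk term and a boundary term.

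\textbf{Step 3 (Abel summation in each of the remaining coordinates).} For fixed $i_1$, I would bound $|Y_{i_1}(\gr{s})|$ uniformly in $\gr{s}$ by $3^{d-1}$ times a maximum of partial sums. The key observation is that for every $q\in\{2,\dots,d\}$, the function $i_q\mapsto b_{i_q}(s_q)$ is nonincreasing, equal to $1$ on $i_q\leq[n_qs_q]$, to a fractional value at $i_q=[n_qs_q]+1$, and vanishes beyond, so it can be written as a convex combination of two indicators of initial segments $\{i_q\leq k_q\}$. Applying Abel's summation coordinate by coordinate, each step introduces a partial sum $\sum_{1\leq i_\ell\leq k_\ell}$ in that coordinate and picks up a numerical factor at most $3$ (from the total variation of $b_{i_q}(s_q)$ together with the boundary contribution). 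Iterating $d-1$ times gives
\begin{equation*}
 \sup_{\gr{s}\in[0,1]^{d-1}}|Y_{i_1}(\gr{s})|\leq 3^{d-1}\max_{\substack{1\leq k_\ell\leq n_\ell\\ 2\leq\ell\leq d}}\Bigl|\sum_{\substack{1\leq i_\ell\leq k_\ell\\ 2\leq\ell\leq d}}X_{(i_1,i_2,\dots,i_d)}\Bigr|.
\end{equation*}

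\textbf{Step 4 (Reassembly).} For the bulk part, since $a_{i_1}\equiv 1$ on $[n_1t]+1\leq i_1\leq[n_1t']$, one performs the Abel summation jointly over the $d-1$ inner coordinates \emph{after} summing in $i_1$, so that the inner sum appears inside the maximum; an extra factor $3$ is absorbed in the same way one treats a coordinate, yielding $3^d$ and the first term of the bound (with the indicator $\mathbf 1\{t'-t\geq 1/n_1\}$). For the boundary part, the sum runs over at most two indices $i_1\in\{1,\dots,n_1\}$, each with $|a_{i_1}|\leq\min\{1,n_1(t'-t)\}$; bounding the two contributions by $2$ times the maximum over $1\leq i_1\leq n_1$ and using the same $3^{d-1}$ factor from Step~3 (absorbing the $2$ into the constant $3^d$) produces the second term.

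The main obstacle is the bookkeeping in Step~3: making sure that the iterated Abel summation, carried out on the non-monotone-but-piecewise-affine weights $b_{i_q}(s_q)$, produces exactly a maximum of initial-segment partial sums with the advertised constant $3^{d-1}$ (or $3^d$ after merging with Step~4); conceptually this is just the total-variation-plus-boundary estimate applied $d-1$ times, but getting the uniformity in $\gr{s}$ and the exact form of the maximum requires care.
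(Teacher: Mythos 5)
This lemma is not proved in the paper: it is imported verbatim as Lemma~11 of Ra\v{c}kauskas--Suquet--Zemlys \cite{MR2340883}, so there is no internal proof to compare against. Your argument is the standard one for this kind of statement (unfold the piecewise-multilinear weights, bulk/boundary split in the first coordinate, summation by parts in the others) and it is essentially sound; in fact Step~3 gives constant $1$ rather than $3^{d-1}$, since each weight $i_q\mapsto b_{i_q}(s_q)$ is a convex combination of two indicators of initial segments, so the product over $q$ is a convex combination of $2^{d-1}$ such indicators and the triangle inequality already yields $\sup_{\gr{s}}\abs{Y_{i_1}(\gr{s})}\leq\max_{\gr{k}}\abs{\sum_{i_\ell\leq k_\ell}X_{\gri}}$.

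Two bookkeeping claims in Step~2 are, however, false as stated and should be repaired. First, $a_{i_1}(t,t')=1$ is \emph{not} equivalent to $[n_1t]+1\leq i_1\leq [n_1t']$: when $n_1t\notin\Z$ the leftmost cell $\pr{[n_1t],[n_1t]+1}$ of that range is only partially covered, so the first term of the lemma (which sums over exactly that range) overcounts by $\pr{1-a_{[n_1t]+1}}Y_{[n_1t]+1}$, a deficit that can be close to $1$ and is \emph{not} bounded by $\min\ens{1,n_1(t'-t)}$ in general. Second, the range $\ens{[n_1t]+1,\dots,[n_1t']}$ can be non-empty even when $t'-t<1/n_1$. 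Both slips disappear if you split into cases: if $t'-t\geq 1/n_1$, then $\min\ens{1,n_1(t'-t)}=1$, so the left-cell deficit and the right boundary cell (each with coefficient of modulus at most $1$, and there are at most two of them) are absorbed into the second term with constant $2\cdot 3^{d-1}\leq 3^d$; if $t'-t<1/n_1$, the first term is switched off by the indicator and you must bound the \emph{entire} sum by the second term, which works because $\sum_{i_1}a_{i_1}(t,t')=n_1(t'-t)=\min\ens{1,n_1(t'-t)}$ there. With this case distinction your proof is complete.
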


Now, we define for $q\in [d]$ and 
$\gr{s}=\pr{s_\ell}_{\ell\in [d] \setminus \ens q}\in [0,1]^{d-1}$, 
\begin{equation*}
 \Delta_{\grn}^{(q)}(t,t',\gr{s}):=
 \abs{W_{\grn}\pr{s_1,\dots,s_{q-1},t',s_{q+1},\dots,s_d}-
 W_{\grn}\pr{s_1,\dots,s_{q-1},t,s_{q+1},\dots,s_d}}.
\end{equation*}
 By definition of $\lambda_{j,\gr{v}}$ and $V_j$, the inequality 
\begin{multline}\label{eq:Hölder_modulus_random_fields_probability_bound_proof}
  \PP\ens{\sup_{j\geqslant J}\rho\pr{2^{-j}}^{-1}\max_{\gr{v}\in V_j}\abs{\lambda_{j,\gr{v}}\pr{W_{\grn}}
   }>2^{d+1} \eps} \\ \leq
 \sum_{q=1}^d\PP\ens{\sup_{j\geqslant J}\rho\pr{2^{-j}}^{-1}
 \max_{\mathclap{\substack{0\leq k<2^j\\
 \gr{0} \imd \gru\imd 2^j\gr{1}}}} \Delta^{(q)}_{\grn}\left(t_{k+1},t_k;
 \gr{s_u}\right)>2\cdot  \eps}
\end{multline}
takes place, where $t_k=k2^{-j}$ and $\gr{s_u}=\left(u_i2^{-j}\right)_{i\in [d] \setminus 
\ens q}$. We show that for each positive $\eps$, 
\begin{equation}
 \lim_{J\to\infty}\limsup_{\grn\to \infty}
 \PP\ens{\sup_{j\geqslant J}\rho\pr{2^{-j}}^{-1}
 \max_{\mathclap{\substack{0\leq k<2^j\\
 \gr{0} \imd \gru\imd 2^j\gr{1}}}} \Delta^{(d)}_{\grn}\left(t_{k+1},t_k;
 \gr{s_u}\right)>2\cdot 3^d\cdot  \eps},
\end{equation}
the treatment of the corresponding terms with $\Delta^{(q)}$ instead of 
$\Delta^{(d)}$ can be 
done 
by switching the roles of the coordinates. We will use the following notations:
we will denote by $\gri,\grk,\grn,\grm$ elements of $\Z^d$ and 
$\gr{i'},\gr{k'},\gr{n'},\gr{m'}$ elements of $\Z^{d-1}$ and 
$\pr{\pr{\gr{i'},i_d}}$ will denote the element of $\Z^d$ whose first $d-1$ 
coordinates are those of $\gr{i'}$ and $d$th one is $i_d$, and similarly for
other letters. We will denote by $\imd$ the coordinatewise order on $\Z^d$ and 
$\Z^{d-1}$, since there will be no ambiguity. Finally, we will let $\gr{i'}$ 
denote the element of $\Z^{d-1}$ whose coordinates are all equal to $1$.
 
We have in view of Lemma~\ref{lem:Rac_Suq_Zem} that  
\begin{multline}
 \PP\ens{\sup_{j\geqslant J}\rho\pr{2^{-j}}^{-1}
 \max_{\mathclap{\substack{0\leq k<2^j\\
 \gr{0} \imd \gru\imd 2^j\gr{1}}}}\Delta^{\pr{d}}_{\grn}\left(t_{k+1},t_k;
 \gr{s_u}\right)>2\cdot 3^d\eps\abs{\grn}^{1/2}}\\
 \leq \PP\left\{\sup_{j\geqslant J}
 \max_{0\leq a<2^j}\rho\pr{2^{-j}}^{-1} \mathbf 1\ens{2^{-j}\geqslant \frac 
1{n_d}} 
  \max_{\gr{1'}\imd \gr{k'}\imd\gr{n'} }  
 \abs{\sum_{i_d=[n_da2^{-j}]+1}^{[n_d(a+1)2^{-j}]} 
  \sum_{\gr{1'}\imd\gr{i'}\imd\gr{k'} }  X_{\pr{\gr{i'},i_d} } }+\right.\\
  \left.+\min\ens{1,n_d2^{-j}}\max_{1\leq i_d\leq n_d}
 \max_{\gr{1'}\imd \gr{k'}\imd\gr{n'} } \abs{ 
   \sum_{\gr{1'}\imd\gr{i'}\imd\gr{k'} }  X_{\pr{\gr{i'},i_d} }}
 >2\eps\abs{\grn}^{1/2}\right\} \\
 \leq \PP\left\{\sup_{j\geqslant J}\max_{0\leq a<2^j}\rho\pr{2^{-j}}^{-1}
 \mathbf 1\ens{2^{-j}\geqslant \frac 1{n_d}} 
  \max_{\gr{1'}\imd \gr{k'}\imd\gr{n'} }  
 \abs{\sum_{i_d=[n_da2^{-j}]+1}^{[n_d(a+1)2^{-j}]} 
  \sum_{\gr{1'}\imd\gr{i'}\imd\gr{k'} }  X_{\pr{\gr{i'},i_d} } 
}>\eps\abs{\grn}^{1/2}\right\}+\\
  +\PP\left\{\sup_{j\geqslant J}\rho\pr{2^{-j}}^{-1}
  \min\ens{1,n_d2^{-j}}\max_{1\leq i_d\leq n_d}
 \max_{\gr{1'}\imd \gr{k'}\imd\gr{n'} } \abs{ 
   \sum_{\gr{1'}\imd\gr{i'}\imd\gr{k'} }  X_{\pr{\gr{i'},i_d} 
}}>\eps\abs{\grn}^{1/2}\right\}.
  \label{eq:Hölder_modulus_random_fields_probability_bound_proof_2}
\end{multline}
Since the indicator in the first term of the right hand side of 
\eqref{eq:Hölder_modulus_random_fields_probability_bound_proof_2} vanishes 
if $j>\log n_d$, we have 
\begin{multline}
 \PP\left\{\sup_{j\geqslant J}\max_{0\leq a<2^j}\rho\pr{2^{-j}}^{-1} 
 \mathbf 1\ens{2^{-j}\geqslant \frac 1{n_d}} \max_{\gr{1'}\imd 
\gr{k'}\imd\gr{n'} }  
 \abs{\sum_{i_d=[n_da2^{-j}]+1}^{[n_d(a+1)2^{-j}]} 
  \sum_{\gr{1'}\imd\gr{i'}\imd\gr{k'} }  X_{\pr{\gr{i'},i_d} } 
}>\eps\abs{\grn}^{1/2}\right\}\\
 \leq \PP\left\{\sup_{J\leq j\leq  \log n_d}\max_{0\leq 
a<2^j}\rho\pr{2^{-j}}^{-1}
 \max_{\gr{1'}\imd \gr{k'}\imd\gr{n'} }  
 \abs{\sum_{i_d=[n_da2^{-j}]+1}^{[n_d(a+1)2^{-j}]} 
  \sum_{\gr{1'}\imd\gr{i'}\imd\gr{k'} }  X_{\pr{\gr{i'},i_d} } 
}>\eps\abs{\grn}^{1/2}\right\}\\
  \leq \sum_{j=J}^{\log n_d}2^j
  \max_{0\leq a <2^j}\PP\ens{
  \rho\pr{2^{-j}}^{-1} 
 \max_{\gr{1'}\imd \gr{k'}\imd\gr{n'} }  
 \abs{\sum_{i_d=[n_da2^{-j}]+1}^{[n_d(a+1)2^{-j}]} 
  \sum_{\gr{1'}\imd\gr{i'}\imd\gr{k'} }  X_{\pr{\gr{i'},i_d} } 
}>\eps\abs{\grn}^{1/2}
  },
\end{multline}
and by stationarity, it follows that 
\begin{multline}
 \PP\left\{\sup_{j\geqslant J}\max_{0\leq a<2^j}\rho\pr{2^{-j}}^{-1} 
 \mathbf 1\ens{2^{-j}\geqslant \frac 1{n_d}} \max_{\gr{1'}\imd 
\gr{k'}\imd\gr{n'} }  
 \abs{\sum_{i_d=[n_da2^{-j}]+1}^{[n_d(a+1)2^{-j}]} 
  \sum_{\gr{1'}\imd\gr{i'}\imd\gr{k'} }  X_{\pr{\gr{i'},i_d} } 
}>\eps\abs{\grn}^{1/2}\right\}\\
 \leq \sum_{j=J}^{\log n_d}2^j
  \PP\ens{\rho\pr{2^{-j}}^{-1} 
 \max_{1\leq k_d\leq 2n_d2^{-j} } \max_{\gr{1'}\imd \gr{k'}\imd\gr{n'} }  
 \abs{\sum_{i_d= 1}^{k_d} 
  \sum_{\gr{1'}\imd\gr{i'}\imd\gr{k'} }  X_{\pr{\gr{i'},i_d} } 
}>\eps\abs{\grn}^{1/2}
  }.
\end{multline}
If $\grn =\pr{n_1,\dots,n_d}$ is such that $2^{m_i}\leq n_i\leq  
2^{m_i+1}-1$ for each $i\in [d]$, then we derive that 
\begin{multline}
\label{eq:Hölder_modulus_random_fields_probability_bound_proof_3}
 \PP\left\{\sup_{j\geqslant J}\max_{0\leq a<2^j}\rho\pr{2^{-j}}^{-1} 
 \mathbf 1\ens{2^{-j}\geqslant \frac 1{n_d}} \max_{\gr{1'}\imd 
\gr{k'}\imd\gr{n'} }  
 \abs{\sum_{i_d=[n_da2^{-j}]+1}^{[n_d(a+1)2^{-j}]} 
  \sum_{\gr{1'}\imd\gr{i'}\imd\gr{k'} }  X_{\pr{\gr{i'},i_d} } 
}>\eps\abs{\grn}^{1/2}\right\}\\
 \leq \sum_{j=J}^{m_d+1}2^j
  \PP\ens{\rho\pr{2^{-j}}^{-1} 
  \max_{\gr{1}\imd \gr{k}\imd\gr{2^{\grm-
 \pr{j-2}\gr{e_d}}} }  
 \abs{ 
  \sum_{\gr{1}\imd\gr{i}\imd\gr{k} }  X_{\pr{\gr{i'},i_d} } 
}>\eps \prod_{u=1}^d 2^{m_u/2}
  }.
\end{multline}
For the second term of the right hand side of 
\eqref{eq:Hölder_modulus_random_fields_probability_bound_proof_2}, notice that 
\begin{equation}
 \sup_{j\geqslant J}\rho\pr{2^{-j}}^{-1}\min\ens{1,n_d2^{-j}}\leq c_\rho 
\rho\pr{1/n_d}^{-1}.
\end{equation}
Indeed, if $j\leq \log n_d$, then $2^j\leq n_d$ hence 
$\rho\pr{2^{-j}}^{-1}\min\ens{1,n_d2^{-j}}=\rho\pr{2^{-j}}^{-1}$ and the 
sequence $\pr{\rho\pr{2^{-j}}^{-1}}_j$ is increasing, and if $j>\log n_d$, then 
$2^j>n_d$, hence $\min\ens{1,n_d2^{-j}}=n_d2^{-j}$ and for such $j$'s, we have 
$\rho\pr{2^{-j}}^{-1}n_d2^{-j}$ and we use decreasingness of 
the sequence $\pr{\rho\pr{2^{-j}}^{-1}2^{-j}}_j$.
As a consequence, after having bounded the probability of the max over $i_d$ by 
the 
sum of probabilities and used stationarity, we obtain 
\begin{multline}
 \PP\left\{\sup_{j\geqslant J}\rho\pr{2^{-j}}^{-1}
  \min\ens{1,n_d2^{-j}}\max_{1\leq i_d\leq n_d}
 \max_{\gr{1'}\imd \gr{k'}\imd\gr{n'} } \abs{ 
   \sum_{\gr{1'}\imd\gr{i'}\imd\gr{k'} }  X_{\pr{\gr{i'},i_d} 
}}>\eps\abs{\grn}^{1/2}\right\}\\
  \leq n_d
  \PP\left\{ \max_{\gr{1'}\imd \gr{k'}\imd\gr{n'} } \abs{ 
   \sum_{\gr{1'}\imd\gr{i'}\imd\gr{k'} }  
X_{\pr{\gr{i'},1}}}>\eps\abs{\grn}^{1/2}\rho\pr{1/n_d} /c_{\rho}\right\}.
\end{multline}
Notice that if $\grn =\pr{n_1,\dots,n_d}$ is such that $2^{m_i}\leq n_i\leq  
2^{m_i+1}-1$ for each $i\in [d]$, then for 
each $j\leq m_d$, 
\begin{multline}
\frac{1}{\abs{\grn}^{1/2}\rho\pr{1/n_d} }\max_{\gr{1'}\imd \gr{k'}\imd\gr{n'} } 
\abs{ 
   \sum_{\gr{1'}\imd\gr{i'}\imd\gr{k'} }  
X_{\pr{\gr{i'},1}}}  
  \leq 
  \frac{1}{\rho\pr{2^{-m_{d}-1}}\prod_{u=1}^d2^{m_u/2}} 
 \max_{\gr{1'}\imd\gr{k'}\imd 
\gr{2^{m'}}}\abs{\sum_{\gr{1'}\imd\gr{i'}\imd\gr{k'} }  
X_{\pr{\gr{i'},1}}}\\ 
  \leq 
  \frac{1}{\rho\pr{2^{-m_{d}-1}}\prod_{u=1}^d2^{m_u/2}}  
   \max_{1\leq k_d\leq 2^{m_d+1-j}} \max_{\gr{1'}\imd\gr{k'}\imd 
\gr{2^{m'}}}\abs{\sum_{i_d=1}^{k_d}\sum_{\gr{1'}\imd\gr{i'}\imd\gr{k'} }  
X_{\pr{\gr{i'},i_d}}}
\end{multline}
hence 
\begin{multline}
\label{eq:Hölder_modulus_random_fields_probability_bound_proof_4}
 \PP\left\{\sup_{j\geqslant J}\rho\pr{2^{-j}}^{-1}
  \min\ens{1,n_d2^{-j}}\max_{1\leq i_d\leq n_d}
 \max_{\gr{1'}\imd \gr{k'}\imd\gr{n'} } \abs{ 
   \sum_{\gr{1'}\imd\gr{i'}\imd\gr{k'} }  X_{\pr{\gr{i'},i_d} 
}}>\eps\abs{\grn}^{1/2}\right\}\\
  \leq  \sum_{j=J}^{m_d+1}2^j \PP\ens{
 \max_{\gr{1}\imd\gr{k}\imd 
\gr{2^{m-\pr{j-1}\gr{e_d}}}}\abs{\sum_{\gr{1}\imd\gr{i}\imd
\grk }  
X_{\gri}}> \eps\rho\pr{2^{-j}}
  \prod_{u=1}^d2^{m_u/2}/c_\rho}.
\end{multline}

Combining inequalities \eqref{eq:Hölder_modulus_random_fields_probability_bound_proof_2}
with \eqref{eq:Hölder_modulus_random_fields_probability_bound_proof_3} and 
\eqref{eq:Hölder_modulus_random_fields_probability_bound_proof_4}, 
we obtain \eqref{eq:step_for_tightness}
. This ends the proof of 
Proposition~\ref{prop:tightness_partial_sums}.

\subsection{Proof of Theorem~\ref{thm:WIP_Holder}}

We have to check that \eqref{eq:tightness_random_fields_sufficient_cond} is satisfied. 
For simplicity, we will do this for $q=d$; the general case 
can be done similarly. 
To this aim, we apply inequality \eqref{eq:exp_deviation_inequality_orthomartingale} for fixed $\gr{m}\smd\gr{1}$, $J\geq 1$ 
and $j\in\ens{J,\dots,m_d}$ in the following setting: 
$\widetilde{\gr{m}}=\pr{2^{m_1},\dots,2^{m_{d-1}},2^{m_d-j}}$, 
$x=\eps \rho\pr{2^{-j}}2^{j/2}$ and 
$y= \eps/2\rho\pr{2^{-j}}2^{j/2} j^{-d/2}$. The sum of the 
exponential terms in \eqref{eq:exp_deviation_inequality_orthomartingale} can 
be bounded by the remainder of a convergent series. 
With the assumption on $\rho$, the sum of the obtained integral terms does not exceed $\sum_{j\geq J}2^j\int_1^{+\infty}\PP\ens{\abs{X_{\gr{1}}}>L\pr{2^j}uC}
 u\pr{\log\pr{1+u}}^{p_d}du$ where $C$ depends only on $\rho$ and $\eps$.
 
 Now, we will show that \eqref{eq:cond_suff_WIP} 
 guarantees the convergence to zero of the previous term 
 as $J$ goes to infinity. As there exists a constant $\kappa$  such that  $u\pr{\log\pr{1+u}}^{p_d}\leq \kappa u^2$ for each $u\geq 1$, 
 it suffices to prove that for each $C$, 
 \begin{equation}
  \sum_{j\geq 1}2^j\int_1^{+\infty}\PP\ens{\abs{X_{\gr{1}}}>L\pr{2^j}uC}
 u^{2}du<\infty 
 \end{equation}
This will be a consequence of the following lemmas. 
\begin{Lemma}\label{lem:slowly_varying_2j_sur_L2j}
 Let $L\colon \R_{+}\to \R_{+}$ be a slowly varying function. There exists a constant $C_L$ such that for each $k\geq 1$, 
 \begin{equation}\label{eq:slowly_varying_2j_sur_L2j}
  \sum_{j=1}^k\frac{2^j}{L\pr{2^j}}\leq C_L\frac{2^k}{L\pr{2^k}}.
 \end{equation}

\end{Lemma}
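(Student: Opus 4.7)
The plan is to exploit the fact that the sequence $a_j := 2^j/L(2^j)$ grows essentially geometrically. Indeed, the ratio
\[
\frac{a_{j+1}}{a_j} = \frac{2\, L(2^j)}{L(2^{j+1})}
\]
tends to $2$ as $j\to\infty$ because $L$ is slowly varying. So $\sum_{j=1}^k a_j$ should be dominated by its last term $a_k$ up to a multiplicative constant, just as in the trivial geometric series case $a_j = 2^j$.

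Concretely, I would first use slow variation to fix $J_0$ such that $L(2^{j+1})/L(2^j) \leq 4/3$ for all $j\geq J_0$. This gives $a_{j+1} \geq (3/2)a_j$ for $j\geq J_0$, hence by induction $a_j \leq (2/3)^{k-j} a_k$ whenever $J_0 \leq j \leq k$. Summing the geometric series yields
\[
\sum_{j=J_0}^k a_j \leq a_k \sum_{i=0}^{\infty} (2/3)^i = 3 a_k.
\]

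Next, I would absorb the initial finite segment $M := \sum_{j=1}^{J_0-1} a_j$ into the bound. For $k\geq J_0$, the inequality $a_k \geq (3/2)^{k-J_0}a_{J_0} \geq a_{J_0}$ gives $M \leq (M/a_{J_0}) a_k$, so
\[
\sum_{j=1}^k a_j \leq \bigl(M/a_{J_0} + 3\bigr) a_k.
\]
For the remaining finitely many values $1\leq k < J_0$, the trivial bound $\sum_{j=1}^k a_j / a_k \leq M/\min_{1\leq j<J_0} a_j$ holds because all the $a_j$ are positive. Taking $C_L$ to be the maximum of these two constants finishes the proof.

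There is no real obstacle here; the only mild subtlety is to make sure the constant is uniform in $k\geq 1$, which is handled by treating the finite initial range separately from the geometric tail. This is essentially the discrete analogue of Karamata's theorem for $\int_1^x t/L(t)\,dt \sim x^2/(2L(x))$.
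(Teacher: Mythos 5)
Your proof is correct, but it follows a genuinely different route from the paper's. The paper invokes Potter's bound (Lemma~1.5.6 of Bingham--Goldie--Teugels): there is a constant $K$ with $L\pr{2^k}\leq K L\pr{2^j}2^{(k-j)/2}$ uniformly for $1\leq j\leq k$, which converts the sum directly into $K\frac{2^{k/2}}{L\pr{2^k}}\sum_{j=1}^k 2^{j/2}$ and hence into a single geometric series, with no need to treat small indices separately. You instead use only the defining property of slow variation at the single scale $c=2$, namely $L\pr{2t}/L\pr{t}\to 1$, to show that $a_j=2^j/L\pr{2^j}$ eventually grows at least geometrically with ratio $3/2$; you then sum the geometric tail and absorb the finite initial segment by hand, using positivity of the $a_j$ for the finitely many remaining values of $k$. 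Your argument is more elementary and self-contained --- it does not rely on Potter's theorem, and in particular does not even need the measurability of $L$ that Potter's bound presupposes --- at the cost of a non-explicit constant and the extra bookkeeping for $1\leq k<J_0$. The paper's argument is shorter and yields a uniform bound in one stroke, but leans on a cited classical result. Both are complete and correct proofs of the lemma.
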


\begin{proof}
From Potter's bound (see Lemma 1.5.6. in \cite{MR1015093}), there exists a constant $K$ such that for each $1\leq j\leq k$, $L\pr{2^k} \leq K L\pr{2^j} 2^{\frac{k-j}2}$. Consequently, 
\begin{equation}
\sum_{j=1}^k\frac{2^j}{L\pr{2^j}}
=\sum_{j=1}^k\frac{2^j}{L\pr{2^k}}\frac{L\pr{2^k}}{L\pr{2^j}}\leq K\sum_{j=1}^k\frac{2^j}{L\pr{2^k}}2^{\frac{k-j}2}=K\frac{2^{k/2}}{L\pr{2^k}}\sum_{j=1}^k2^{\frac j2}
\end{equation}
and the change of index $\ell=k-j$ shows that we can take $C_L=K\sum_{\ell\geq 0}2^{-\ell/2}$. This ends the proof of Lemma~\ref{lem:slowly_varying_2j_sur_L2j}.

\end{proof}

\begin{Lemma}\label{lem:slowly_varying}
 Let $X$ be a non-negative random variable and let $L\colon \R_{+}\to \R_{+}$ be a slowly varying increasing function such that $L\pr{x}\to \infty$ 
 as $x\to \infty$. Suppose that 
 \begin{equation}
  \forall A>0, \sum_{j\geq 1}2^j\PP\ens{X>L\pr{2^j}A}<\infty.
 \end{equation}
Then for each $C>0$, 
\begin{equation}
 \sum_{j\geq 1}2^j\int_1^{+\infty}\PP\ens{X>L\pr{2^j}uC}
 u^{2}du<\infty.
\end{equation}
\end{Lemma}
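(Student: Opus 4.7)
My plan is to reduce the claim to the hypothesis via a Fubini-type computation after first turning the inner integral into a standard moment-type quantity. Substituting $v=L\pr{2^j}uC$ in the inner integral gives
\begin{equation*}
\int_1^{+\infty}\PP\ens{X>L\pr{2^j}uC}u^2du=\frac{1}{L\pr{2^j}^3C^3}\int_{L\pr{2^j}C}^{+\infty}\PP\ens{X>v}v^2dv\leq \frac{1}{3L\pr{2^j}^3C^3}\E{X^3\mathbf{1}\ens{X>L\pr{2^j}C}},
\end{equation*}
so the sum to estimate is bounded by $\frac{1}{3C^3}\E{X^3 T\pr{X}}$, where
\begin{equation*}
T\pr{X}:=\sum_{j\geq 1,\,L\pr{2^j}C<X}\frac{2^j}{L\pr{2^j}^3}.
\end{equation*}

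Next, I would mimic the proof of Lemma~\ref{lem:slowly_varying_2j_sur_L2j} with $1/L^3$ in place of $1/L$: Potter's bound applied with a small enough $\delta$ (for instance $\delta=1/8$, so that $L\pr{2^k}^3\leq K L\pr{2^j}^3 2^{3\pr{k-j}\delta}$ for $1\leq j\leq k$) yields, by the same geometric-series computation, a constant $C'_L$ such that $\sum_{j=1}^k 2^j/L\pr{2^j}^3\leq C'_L\, 2^k/L\pr{2^k}^3$ for all $k\geq 1$. Setting $J\pr{t}:=\max\ens{j\geq 1:L\pr{2^j}\leq t}$ (with $J\pr{t}=0$ if this set is empty), this gives $T\pr{X}\leq C'_L\, 2^{J\pr{X/C}}/L\pr{2^{J\pr{X/C}}}^3$. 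Slow variation of $L$ together with $L\pr{t}\to\infty$ implies $L\pr{2t}/L\pr{t}\to 1$, hence $L\pr{2t}\leq 2L\pr{t}$ for $t$ sufficiently large; combined with the defining inequality $X/C<L\pr{2^{J\pr{X/C}+1}}$, this yields $L\pr{2^{J\pr{X/C}}}\geq X/\pr{2C}$ for $X$ above some threshold. Hence $X^3 T\pr{X}\leq 8C^3C'_L\, 2^{J\pr{X/C}}$ on this range, while on the complementary bounded range both $X^3$ and $T\pr{X}$ are uniformly bounded.

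It remains to check that $\E{2^{J\pr{X/C}}}<\infty$. The telescoping identity $2^{J\pr{X/C}}=1+\sum_{j=1}^{J\pr{X/C}}2^{j-1}$, together with the equivalence $\ens{J\pr{X/C}\geq j}=\ens{L\pr{2^j}\leq X/C}=\ens{X\geq L\pr{2^j}C}$ (which uses monotonicity of $L$), yields
\begin{equation*}
\E{2^{J\pr{X/C}}}\leq 1+\sum_{j\geq 1}2^{j-1}\PP\ens{X\geq L\pr{2^j}C},
\end{equation*}
which is finite by the hypothesis applied with $A=C$. Combining the three steps completes the proof. The only mildly delicate point is the analog of Lemma~\ref{lem:slowly_varying_2j_sur_L2j} with the weight $1/L^3$, which is an exercise in Potter's bound with a $\delta$ chosen small enough that the exponent in the resulting geometric sum is strictly between $0$ and $1$, so that the sum collapses to its last term up to a multiplicative constant; everything else is routine bookkeeping.
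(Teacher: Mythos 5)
Your proof is correct and takes essentially the same route as the paper's: the same cubic-moment bound for the inner integral, the same Potter-bound estimate $\sum_{j\leq k}2^j/L\pr{2^j}^3\lesssim 2^k/L\pr{2^k}^3$ (i.e.\ Lemma~\ref{lem:slowly_varying_2j_sur_L2j} applied to $L^3$), and the same reduction to the hypothesis; your Fubini-plus-$J\pr{X/C}$ bookkeeping is just a repackaging of the paper's shell decomposition over $\ens{L\pr{2^k}<X\leq L\pr{2^{k+1}}}$ followed by the interchange of sums. The only cosmetic point is that $\PP\ens{X\geq L\pr{2^j}C}$ should be dominated by $\PP\ens{X>L\pr{2^j}C/2}$, so the hypothesis is invoked with $A=C/2$ rather than $A=C$.
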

\begin{proof}
 Without loss of generality, we can assume that $C=1$. First, observe that 
 \begin{equation}
  \int_1^{+\infty}\PP\ens{X>L\pr{2^j}u}
 u^{2}du\leq \E{\pr{\frac{X}{L\pr{2^j}}}^{3}\mathbf{1}\ens{X>L\pr{2^j} }}.
 \end{equation}
Therefore, 
\begin{multline}
 \sum_{j\geq 1}2^j\int_1^{+\infty}\PP\ens{X>L\pr{2^j}u}
 u^{2}du \\
 \leq \sum_{j\geq 1}2^j\E{\pr{\frac{X}{L\pr{2^j}}}^{3}\sum_{k\geq j}\mathbf{1}\ens{ L\pr{2^k}<X\leq L\pr{2^{k+1}} }}\\
 \leq \sum_{j\geq 1}2^j\E{\sum_{k\geq j}\pr{\frac{L\pr{2^{k+1}}    }{L\pr{2^j}}}^{3}\mathbf{1}\ens{ L\pr{2^k}<X\leq L\pr{2^{k+1}} }}.
\end{multline}
Switching the sums over $j$ and $k$ and using Lemma~\ref{lem:slowly_varying}  with $L^3$ instead of $L$ reduces use to show that 
\begin{equation}
 \sum_{k\geq 1} 2^k\PP\ens{ L\pr{2^k}<X\leq L\pr{2^{k+1}} }<\infty,
\end{equation}
which follows from the assumption and the obvious bound $\PP\ens{ L\pr{2^k}<X\leq L\pr{2^{k+1}} }\leq 
\PP\ens{ L\pr{2^k}<X }$.
 
\end{proof}

 The last part of the statement of Theorem~\ref{thm:WIP_Holder} 
 follow from \cite{MR3913270}. 
 This ends the proof of Theorem~\ref{thm:WIP_Holder}.

\textbf{Acknowledgement.} The author is grateful to the referee for 
many useful comments and corrections that improved the paper. 

\def\polhk\#1{\setbox0=\hbox{\#1}{{\o}oalign{\hidewidth
  \lower1.5ex\hbox{`}\hidewidth\crcr\unhbox0}}}\def\cprime{$'$}
  \def\polhk#1{\setbox0=\hbox{#1}{\ooalign{\hidewidth
  \lower1.5ex\hbox{`}\hidewidth\crcr\unhbox0}}} \def\cprime{$'$}
\providecommand{\bysame}{\leavevmode\hbox to3em{\hrulefill}\thinspace}
\providecommand{\MR}{\relax\ifhmode\unskip\space\fi MR }
% \MRhref is called by the amsart/book/proc definition of \MR.
\providecommand{\MRhref}[2]{%
  \href{http://www.ams.org/mathscinet-getitem?mr=#1}{#2}
}
\providecommand{\href}[2]{#2}


\begin{thebibliography}{10}

\bibitem{MR0126871}
Patrick Billingsley, \emph{{The {L}indeberg-{L}{\'e}vy theorem for
  martingales}}, Proc. Amer. Math. Soc. \textbf{12} (1961), 788--792.
  \MR{0126871 (23 \#A4165)}

\bibitem{MR1015093}
Nicolas Bingham, Charles Goldie and Jan Teugels,
 \emph{Regular variation},
 Encyclopedia of Mathematics and its Applications,
 27, Cambridge University Press, Cambridge, 1989,
  \MR{1015093}

\bibitem{MR0254912}
Renzo Cairoli, \emph{Un th\'eor\`eme de convergence pour martingales \`a
  indices multiples}, C. R. Acad. Sci. Paris S\'er. A-B \textbf{269} (1969),
  A587--A589. \MR{0254912 (40 \#8119)}

\bibitem{MR3504508}
Christophe Cuny, Jérôme Dedecker, and Dalibor Voln\'{y}, \emph{A functional {CLT} for fields of
  commuting transformations via martingale approximation}, Zap. Nauchn. Sem.
  S.-Peterburg. Otdel. Mat. Inst. Steklov. (POMI) \textbf{441} (2015),
  no.~Veroyatnost\cprime i Statistika. 22, 239--262. \MR{3504508}

\bibitem{MR1334173}
Victor~H. de~la Pe\~{n}a and S.~J. Montgomery-Smith, \emph{Decoupling
  inequalities for the tail probabilities of multivariate {$U$}-statistics},
  Ann. Probab. \textbf{23} (1995), no.~2, 806--816. \MR{1334173}

\bibitem{MR1875665}
J{\'e}r{\^o}me Dedecker, \emph{Exponential inequalities and functional central
  limit theorems for a random fields}, ESAIM Probab. Statist. \textbf{5}
  (2001), 77--104. \MR{1875665 (2003a:60054)}

\bibitem{MR30714}
Paul~Erd\"{o}s, \emph{On a theorem of {H}su and {R}obbins}, Ann. Math. Statistics
  \textbf{20} (1949), 286--291. \MR{30714}

\bibitem{MR3005732}
Xiequan Fan, Ion Grama, and Quansheng Liu, \emph{Large deviation exponential
  inequalities for supermartingales}, Electron. Commun. Probab. \textbf{17}
  (2012), no. 59, 8. \MR{3005732}

\bibitem{MR3311214}
\bysame, \emph{Exponential inequalities for martingales with applications},
  Electron. J. Probab. \textbf{20} (2015), no. 1, 22. \MR{3311214}

\bibitem{MR3426520}
Davide Giraudo, \emph{Holderian weak invariance principle under a {H}annan type
  condition}, Stochastic Process. Appl. \textbf{126} (2016), no.~1, 290--311.
  \MR{3426520}

\bibitem{MR3615086}
\bysame, \emph{Holderian weak invariance principle for stationary mixing
  sequences}, J. Theoret. Probab. \textbf{30} (2017), no.~1, 196--211.
  \MR{3615086}

\bibitem{MR3770868}
\bysame, \emph{H\"{o}lderian weak invariance principle under the {M}axwell and
  {W}oodroofe condition}, Braz. J. Probab. Stat. \textbf{32} (2018), no.~1,
  172--187. \MR{3770868}

\bibitem{MR3869881}
\bysame, \emph{Invariance principle via orthomartingale approximation}, Stoch.
  Dyn. \textbf{18} (2018), no.~6, 1850043, 29. \MR{3869881}

\bibitem{MR4466386}
\bysame, \emph{{An exponential inequality for $U$-statistics of 
i.i.d.\ data}},  Theory Probab. Appl. 66 (2021), no. 3, 408–429. 
  \MR{4466386}

\bibitem{MR4046858}
\bysame, \emph{Deviation inequalities for {B}anach space valued
  martingales differences sequences and random fields}, ESAIM Probab. Stat.
  \textbf{23} (2019), 922--946. \MR{4046858}

\bibitem{MR1759810}
Djamel Hamadouche, \emph{{Invariance principles in {H}{\"o}lder spaces}}, Portugal.
  Math. \textbf{57} (2000), no.~2, 127--151. \MR{1759810 (2001g:60075)}

\bibitem{MR19852}
Pao Lu Hsu and Herbert Robbins, \emph{Complete convergence and the law of large
  numbers}, Proc. Nat. Acad. Sci. U.S.A. \textbf{33} (1947), 25--31. \MR{19852}

\bibitem{MR0151997}
Il'dar Abdullovich Ibragimov, \emph{A central limit theorem for a class of dependent random
  variables}, Teor. Verojatnost. i Primenen. \textbf{8} (1963), 89--94.
  \MR{0151997 (27 \#1978)}

\bibitem{MR2794415}
Anna Kuczmaszewska and Zbigniew~A. Lagodowski, \emph{Convergence rates in the
  {SLLN} for some classes of dependent random fields}, J. Math. Anal. Appl.
  \textbf{380} (2011), no.~2, 571--584. \MR{2794415}

\bibitem{MR3451971}
Zbigniew~A. Lagodowski, \emph{An approach to complete convergence theorems for
  dependent random fields via application of {F}uk-{N}agaev inequality}, J.
  Math. Anal. Appl. \textbf{437} (2016), no.~1, 380--395. \MR{3451971}

\bibitem{MR1856684}
Emmanuel Lesigne and Dalibor Voln{\'y}, \emph{{Large deviations for
  martingales}}, Stochastic Process. Appl. \textbf{96} (2001), no.~1, 143--159.
  \MR{1856684 (2002k:60080)}

\bibitem{MR2568267}
Quansheng Liu and Fr\'{e}d\'{e}rique Watbled, \emph{Exponential inequalities
  for martingales and asymptotic properties of the free energy of directed
  polymers in a random environment}, Stochastic Process. Appl. \textbf{119}
  (2009), no.~10, 3101--3132. \MR{2568267}

\bibitem{MR2206313}
Florence Merlev{\`e}de, Magda Peligrad, and Sergey Utev, \emph{{Recent advances
  in invariance principles for stationary sequences}}, Probab. Surv. \textbf{3}
  (2006), 1--36. \MR{2206313 (2007a:60025)}

\bibitem{MR3798239}
Magda Peligrad and Na~Zhang, \emph{Martingale approximations for random
  fields}, Electron. Commun. Probab. \textbf{23} (2018), Paper No. 28, 9.
  \MR{3798239}

\bibitem{MR2141332}
Alfredas Ra{\u{c}}kauskas and Charles Suquet, \emph{{Central limit theorems in
  {H}{\"o}lder topologies for {B}anach space valued random fields}}, Teor.
  Veroyatn. Primen. \textbf{49} (2004), no.~1, 109--125. \MR{2141332
  (2006b:60011)}

\bibitem{MR2000642}
Alfredas Ra{\v{c}}kauskas and Charles Suquet, \emph{Necessary and sufficient
  condition for the {L}amperti invariance principle}, Teor. \u Imov\=\i r. Mat.
  Stat. (2003), no.~68, 115--124. \MR{2000642 (2004g:60050)}

\bibitem{MR2054586}
Alfredas Ra\v{c}kauskas and Charles Suquet, \emph{{Necessary and sufficient
  condition for the functional central limit theorem in {H}{\"o}lder spaces}},
  J. Theoret. Probab. \textbf{17} (2004), no.~1, 221--243. \MR{2054586
  (2005b:60085)}

\bibitem{MR2265057}
\bysame, \emph{{H{\"o}lderian invariance principles and some applications for
  testing epidemic changes}}, {Long memory in economics}, Springer, Berlin,
  2007, pp.~109--128. \MR{2265057}

\bibitem{MR2340883}
Alfredas Ra\v{c}kauskas, Charles Suquet, and Vaidotas Zemlys, \emph{{A
  {H}{\"o}lderian functional central limit theorem for a multi-indexed
  summation process}}, Stochastic Process. Appl. \textbf{117} (2007), no.~8,
  1137--1164. \MR{2340883 (2008e:60086)}

\bibitem{MR606989}
Ludger R\"{u}schendorf, \emph{Ordering of distributions and rearrangement of
  functions}, Ann. Probab. \textbf{9} (1981), no.~2, 276--283. \MR{606989}

\bibitem{MR79851}
Frank Spitzer, \emph{A combinatorial lemma and its application to probability
  theory}, Trans. Amer. Math. Soc. \textbf{82} (1956), 323--339. \MR{79851}

\bibitem{MR3427925}
Dalibor Voln\'{y}, \emph{A central limit theorem for fields of martingale
  differences}, C. R. Math. Acad. Sci. Paris \textbf{353} (2015), no.~12,
  1159--1163. \MR{3427925}

\bibitem{MR3913270}
\bysame, \emph{On limit theorems for fields of martingale differences},
  Stochastic Process. Appl. \textbf{129} (2019), no.~3, 841--859. \MR{3913270}

\bibitem{MR0246359}
Michael~J. Wichura, \emph{Inequalities with applications to the weak
  convergence of random processes with multi-dimensional time parameters}, Ann.
  Math. Statist. \textbf{40} (1969), 681--687. \MR{0246359 (39 \#7663)}

\bibitem{MR4166203}
Na~Zhang, Lucas Reding, and Magda Peligrad, \emph{On the quenched central limit
  theorem for stationary random fields under projective criteria}, J. Theoret.
  Probab. \textbf{33} (2020), no.~4, 2351--2379. \MR{4166203}

\end{thebibliography}
\end{document}